\newcommand{\h}{\mathcal{H}}
\newcommand{\aaa}{\alpha}
\newcommand{\bbb}{\beta}
\newcommand{\ccc}{\gamma}
\numberwithin{equation}{section}
\theoremstyle{plain}
\newtheorem{Proposition}[equation]{Proposition}
\newtheorem{Corollary}[equation]{Corollary}
\newtheorem*{Corollary*}{Corollary}
\newtheorem{Theorem}[equation]{Theorem}
\newtheorem*{Theorem*}{Theorem}
\newtheorem{Lemma}[equation]{Lemma}
\theoremstyle{definition}
\newtheorem{Example}[equation]{Example}
\newtheorem{Remark}[equation]{Remark}
\newtheorem{Question}[equation]{Question}
\setlist[enumerate]{leftmargin=*}
\setlist[itemize]{leftmargin=*}
\def\C{\mathbb{C}}
\def\R{\mathbb{R}}
\def\D{\mathbb{D}}
\def\T{\mathbb{T}}
\def\N{\mathbb{N}}
\def\K{\mathcal{K}}
\def\B{\mathcal{B}}
\renewcommand{\leq}{\leqslant}
\renewcommand{\geq}{\geqslant}
\renewcommand{\subset}{\subseteq}
\renewcommand{\phi}{\varphi}
\renewcommand{\vec}[1]{{\bf #1}}
\newcommand{\0}{{\color{lightgray}0}}
\author[J. Mashreghi]{Javad Mashreghi}
\address{D\'epartement de math\'ematiques et de statistique, Universit\'e Laval, Qu\'ebec, QC,
Canada, G1K 0A6}
\email{javad.mashreghi@mat.ulaval.ca}
\author[M. Ptak]{Marek Ptak}
\address{Department of Applied Mathematics,
University of Agriculture, ul. Balicka 253c\\ 30-198 Krak\'ow, Poland.}
\email{rmptak@cyf-kr.edu.pl}
\author[W. Ross]{William T. Ross}
	\address{Department of Mathematics and Computer Science, University of Richmond, Richmond, VA 23173, USA}
	\email{wross@richmond.edu}
	\subjclass[2010]{ 47B35, 47B02, 47A05}
\title[Square root]{The square roots of some classical operators}
\keywords{ Hardy spaces, Toeplitz operators, shift operator, compressed shift, Volterra operator, Ces\`{a}ro operator, Hilbert matrix}
\thanks{This work was supported by the NSERC Discovery Grant (Canada) and by the Ministry of Science and
Higher Education of the Republic of Poland.}
\begin{document}

\begin{abstract}
In this paper we give complete descriptions of the set of square roots of certain classical operators, often providing specific formulas. The classical operators included in this discussion are the square of the unilateral shift, the Volterra operator, certain compressed shifts, the unilateral shift plus its adjoint, the Hilbert matrix, and the Ces\`{a}ro operator.
\end{abstract}

\maketitle

\section{Introduction}

If $\h$ is a complex Hilbert space and $A \in \B(\h)$, the bounded linear operators on $\h$, when does $A$ have a square root? By this we mean, does there exist a $B \in \B(\h)$ such that $B^2 = A$? If $A$ has a square root, can we describe 
$\{B \in \B(\h): B^2 = A\},$
the set of all the square roots of $A$?

First let us make the, perhaps unexpected, observation that not every operator has a square root. For example, Halmos showed that the unilateral shift $S f = z f$ on the Hardy space $H^2$ \cite{MR3526203} does not have a square root \cite{MR270173}. Other examples of operators constructed with the shift $S$ and its adjoint $S^{*}$, for example $S \oplus S^{*}$ and $S \otimes S^{*}$, also do not have square roots  \cite{MR870760}. See the papers \cite{MR53391, MR979593, MR3980672, MR88698, MR278097} for some general results concerning square roots of operators.  

Second, many operators have an abundance of square roots. For example, any nilpotent operator of order two is a square root of the zero operator.  Moreover, to highlight their abundance, Lebow proved (see \cite[Prob.~111]{MR675952}) that when $\operatorname{dim} \mathcal{H} = \infty$, the set 
$\{A \in \B(\h): A^2 = 0\}$ is dense in $\mathcal{B}(\mathcal{H})$ in the strong operator topology. 

Much of the work on square roots has focused on the general topic of which operators have square roots and the prevalence of types of square roots ($p$th roots and logarithms) in $\B(\h)$. Previous papers also have results which explore the relationship between the type of square root as related to the type of operator. In this paper, we focus on a collection of some well-known classical operators and proceed to characterize {\em all} of their square roots. The classical operators included in this discussion are the square of the unilateral shift (Theorem \ref{8UHG7TF}), the Volterra operator (Theorem \ref{sdfjsdf122445}), certain compressed shifts (Theorem \ref{bBbsbabbBBB}), the unilateral shift plus its adjoint (Theorem \ref{Hilbertertertry}), the Hilbert matrix (Theorem \ref{HMiiiiI}), and the Ces\`{a}ro operator (Theorem \ref{cesarooooo} and Theorem \ref{9099887YY66T+}). Our work on the Ces\`{a}ro operator answers a question posed in \cite{MR979593} and stems from a question posed by Halmos.

\section{Square roots of $S^2$}

Suppose that $(S f)(z) = z f(z)$ denotes the unilateral shift on the Hardy space $H^2$ \cite{MR0133008}.
In this section we explore the square roots of $S^2$. One square root of $S^2$ is, of course, $S$ itself. Our characterization of  {\em all} of the square roots of $S^2$, requires a few preliminaries.

For $g \in H^2$, let 
$$g_{e}(z) := \tfrac{1}{2}(g(z) + g(-z)) \quad \mbox{and} \quad g_{o}(z) := \tfrac{1}{2} (g(z) - g(-z))$$ and observe that 
$g(z) = g_{e}(z) + g_{o}(z).$
If 
$g(z) = \sum_{n = 0}^{\infty} a_n z^n,$
then $$g_{e}(z) = \sum_{k = 0}^{\infty} a_{2 k} z^{2 k} \quad \mbox{and} \quad  g_{o}(z) = \sum_{k = 0}^{\infty} a_{2 k + 1} z^{2 k + 1}.$$
This is the ``even'' and ``odd'' decomposition of $g$ since $g_e(-z) = g_e(z)$ and $g_o(-z) = - g_o(z)$. 
Finally, let 
$$(W g)(z) = \begin{bmatrix}{\displaystyle \sum_{k = 0}^{\infty} a_{2 k} z^k}\\ {\displaystyle \sum_{k = 0}^{\infty} a_{2 k + 1} z^{k}}\end{bmatrix}$$ and note that $W$  is a unitary operator from $H^2$ onto 
$$H^2 \oplus H^2 = \Big\{\begin{bmatrix} f\\g\end{bmatrix}: f, g \in H^2\Big\},$$
with 
$$W^{*} \begin{bmatrix} g_1\\g_2\end{bmatrix} =  g_1(z^2) + z g_{2}(z^2).$$

Our last bit of notation is the vector-valued shift 
$$S \oplus S: H^2 \oplus H^2 \to H^2 \oplus H^2, \quad (S \oplus S) \begin{bmatrix}  g_1\\ g_2\end{bmatrix} =   \begin{bmatrix}  Sg_1\\ S g_2\end{bmatrix}.$$  It is traditional to think of $S \oplus S$ in matrix form as 
$$S \oplus S = \begin{bmatrix} S & 0\\ 0 & S \end{bmatrix}.$$

The above formulas yield the following well-known fact.

\begin{Proposition}\label{554188uU}
$W^{*} (S \oplus S) W = S^2$.
\end{Proposition}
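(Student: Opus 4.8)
The plan is to verify the operator identity $W^{*}(S \oplus S) W = S^2$ by checking its action on a generic element of $H^2$, using the explicit formulas for $W$, $W^*$, and $S \oplus S$ that were just recorded. Since $W$ is unitary, it suffices to show $(S \oplus S) W = W S^2$ on all of $H^2$; alternatively one can compute $W^{*}(S \oplus S) W g$ directly and compare with $S^2 g = z^2 g(z)$. I would take the second, more direct route.

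First I would fix $g(z) = \sum_{n=0}^\infty a_n z^n \in H^2$ and apply $W$ to obtain the column vector with top entry $\sum_{k} a_{2k} z^k$ and bottom entry $\sum_k a_{2k+1} z^k$. Next I would apply $S \oplus S$, which simply multiplies each coordinate by $z$, yielding top entry $\sum_k a_{2k} z^{k+1}$ and bottom entry $\sum_k a_{2k+1} z^{k+1}$. Then I would apply $W^{*}$ using the formula $W^{*}\begin{bmatrix} g_1 \\ g_2 \end{bmatrix} = g_1(z^2) + z g_2(z^2)$: the result is $\sum_k a_{2k} z^{2k+2} + \sum_k a_{2k+1} z^{2k+3}$. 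Finally I would observe that the first sum collects the even-indexed Taylor coefficients of $g$ shifted up by two, and the second collects the odd-indexed ones shifted up by two, so the total is $\sum_{n=0}^\infty a_n z^{n+2} = z^2 g(z) = (S^2 g)(z)$. This establishes the identity on a dense set (polynomials), and since all operators involved are bounded, it holds on all of $H^2$.

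There is essentially no serious obstacle here; the statement is flagged as ``well-known'' and the proof is a bookkeeping exercise in re-indexing Taylor coefficients. The one point requiring a modicum of care is keeping the even/odd coefficient streams straight through the three applications and confirming that the exponents line up as $2k+2$ (even case) and $2k+3$ (odd case) so that together they exhaust $\{z^{n+2} : n \geq 0\}$ exactly once each. For completeness I might also remark that the stated formula for $W^{*}$ is indeed the inverse of $W$ — i.e. $W^{*}W = I$ and $WW^{*} = I$ — though this can be taken as given from the preceding discussion, where $W$ was already asserted to be unitary with that adjoint.
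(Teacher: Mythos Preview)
Your proof is correct. The paper does not give an explicit proof of this proposition at all; it simply says ``the above formulas yield the following well-known fact,'' and your direct computation is precisely the verification those formulas invite. One minor remark: your argument in fact works for every $g \in H^2$, not just polynomials, so the density-and-boundedness step is unnecessary (though harmless).
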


Some other well-known facts used in the section involve the commutants of $S$ and $S \oplus S$. For $\varphi \in H^{\infty}$, the bounded analytic functions on $\mathbb D$, the Toeplitz (Laurent) operator 
$T_{\varphi} g = \varphi g $ is bounded on $H^2$ and $S T_{\varphi} = T_{\varphi} S$. Let 
$$\{S\}' = \{A \in \mathcal{B}(H^2): S A = A S\}$$ denote the commutant of $S$. The following fact is standard \cite[Thm.~3.4]{MR2003221}. 

\begin{Proposition}\label{w7iruyghdfs}
$\{S\}' = \{T_{\varphi}: \varphi \in H^{\infty}\}$.
\end{Proposition}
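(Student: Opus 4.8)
The plan is to prove the two inclusions separately; the forward one is immediate, and the reverse one is the real content.

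For $\{T_{\varphi}: \varphi \in H^{\infty}\} \subseteq \{S\}'$, observe that for $\varphi \in H^{\infty}$ and $g \in H^2$ one has $S T_{\varphi} g = z(\varphi g) = \varphi(zg) = T_{\varphi} S g$, so $T_{\varphi}$ commutes with $S$. For the reverse inclusion, suppose $A \in \mathcal{B}(H^2)$ satisfies $AS = SA$, and set $\varphi := A1$, where $1$ denotes the constant function. First I would iterate $AS = SA$ to get $A S^n = S^n A$ for every $n \geq 0$, and hence $A(z^n) = A(S^n 1) = S^n(A1) = z^n \varphi$. By linearity, $Ap = \varphi p$ for every analytic polynomial $p$; since the polynomials are dense in $H^2$ and $A$ is bounded, $A$ is completely determined by this identity.

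The key step is to show that $\varphi$ is in fact bounded, which I would do using the reproducing kernel $k_w(z) = (1 - \bar w z)^{-1}$, $w \in \mathbb{D}$. For every polynomial $p$ one has $\langle p, A^* k_w \rangle = \langle Ap, k_w \rangle = (Ap)(w) = \varphi(w) p(w) = \langle p, \overline{\varphi(w)}\, k_w \rangle$, and density of the polynomials forces $A^* k_w = \overline{\varphi(w)}\, k_w$. Since $\|k_w\|^2 = (1 - |w|^2)^{-1} > 0$, this yields $|\varphi(w)|\, \|k_w\| = \|A^* k_w\| \leq \|A\|\, \|k_w\|$, so $|\varphi(w)| \leq \|A\|$ for all $w \in \mathbb{D}$; that is, $\varphi \in H^{\infty}$ with $\|\varphi\|_{\infty} \leq \|A\|$. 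Then $T_{\varphi}$ is a bounded operator on $H^2$ agreeing with $A$ on the dense subspace of polynomials, whence $A = T_{\varphi}$.

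The main obstacle is precisely the boundedness of $\varphi$: absent that, one only knows that $A$ acts as ``multiplication by $\varphi$'' on polynomials, and there is no a priori way to extend this description to all of $H^2$ (indeed $\varphi f$ need not lie in $H^2$ for general $f \in H^2$ if $\varphi$ is merely in $H^2$). The eigenvector computation for $A^*$ on the reproducing kernels is the cleanest route around this, since it extracts the uniform bound on $\varphi$ before one ever needs to make sense of $\varphi$ times an arbitrary $H^2$ function.
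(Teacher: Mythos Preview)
Your argument is correct; this is the standard reproducing-kernel proof of the commutant of the unilateral shift. Note, however, that the paper does not actually prove this proposition: it is stated as a well-known fact and attributed to \cite[Thm.~3.4]{MR2003221}, so there is no in-paper proof to compare against. Your write-up supplies precisely the argument that reference contains.
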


In a similar way, let
$$\Phi = 
\begin{bmatrix}
\Phi_{11} & \Phi_{12}\\
\Phi_{21} & \Phi_{2 2}
\end{bmatrix},
$$
where $\Phi_{i j} \in H^{\infty}$. We use the notation $M_2(H^{\infty})$ for the $2 \times 2$ matrices above with $H^{\infty}$ entries.  Define $T_{\Phi}: H^2 \oplus H^2 \to H^2 \oplus H^2$ by 
$$T_{\Phi} \begin{bmatrix}g_1\\g_2\end{bmatrix} = 
\begin{bmatrix} \Phi_{11} & \Phi_{1 2}\\ \Phi_{21} & \Phi_{22}\end{bmatrix} 
\begin{bmatrix} g_{1} \\ g_{2} \end{bmatrix} = \begin{bmatrix} \Phi_{11} g_1 + \Phi_{12} g_{2}\\ \Phi_{2 1} g_{1} + \Phi_{2 2} g_2\end{bmatrix}.$$ A calculation shows that $(S \oplus S) T_{\Phi} = T_{\Phi} (S \oplus S)$. Similar to the above, we have the following \cite[Cor.~3.20]{MR2003221}. 

\begin{Proposition}\label{bhubhuUJUJ}
$\{S \oplus S\}' = \{T_{\Phi}: \Phi \in M_{2}(H^{\infty})\}$.
\end{Proposition}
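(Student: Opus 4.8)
The plan is to reduce the claim to the scalar commutant description in Proposition~\ref{w7iruyghdfs} by working in operator-matrix coordinates. Write $\iota_j \colon H^2 \to H^2 \oplus H^2$ for the isometric inclusion into the $j$th coordinate and $P_i \colon H^2 \oplus H^2 \to H^2$ for the orthogonal projection onto the $i$th coordinate, $i, j \in \{1, 2\}$. Every $A \in \B(H^2 \oplus H^2)$ then decomposes as the $2 \times 2$ operator matrix $A = [A_{ij}]$ with $A_{ij} := P_i A \iota_j \in \B(H^2)$, and $A = \sum_{i,j} \iota_i A_{ij} P_j$ holds as an identity of bounded operators, so $A$ is genuinely determined by its four entries.

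First I would translate the commutation relation into coordinates. Since $S \oplus S$ is block diagonal with both diagonal blocks equal to $S$, multiplying operator matrices gives that $(S \oplus S) A = A (S \oplus S)$ holds if and only if $S A_{ij} = A_{ij} S$ for all $i, j$; equivalently, every entry $A_{ij}$ lies in $\{S\}'$. By Proposition~\ref{w7iruyghdfs} this is in turn equivalent to the existence of symbols $\Phi_{ij} \in H^{\infty}$ with $A_{ij} = T_{\Phi_{ij}}$. Collecting these into $\Phi = [\Phi_{ij}] \in M_2(H^{\infty})$ and comparing with the definition of $T_{\Phi}$ given just before the statement shows $A = T_{\Phi}$, which gives the inclusion $\{S \oplus S\}' \subseteq \{T_{\Phi} : \Phi \in M_2(H^{\infty})\}$. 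The reverse inclusion is exactly the computation already recorded in the text, namely $(S \oplus S) T_{\Phi} = T_{\Phi} (S \oplus S)$ for every $\Phi \in M_2(H^{\infty})$, together with the boundedness of each $T_{\Phi_{ij}}$ on $H^2$, which makes $T_{\Phi}$ bounded on $H^2 \oplus H^2$.

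I do not expect a real obstacle here; the argument is a routine block-matrix reduction to the scalar case. The only points that deserve a line of care are that the operator-matrix representation is a faithful one (so the resulting $A = T_{\Phi}$ is an honest operator equality rather than agreement on a dense subspace) and that the correspondence $\Phi \mapsto T_{\Phi}$ is a bijection onto $\{S \oplus S\}'$ --- injectivity follows by evaluating $T_{\Phi}$ on the two constant vector fields whose values run over the standard basis of $\C^2$, which recovers the columns of $\Phi$.
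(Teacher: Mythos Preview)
Your argument is correct: the block-matrix reduction to Proposition~\ref{w7iruyghdfs} is exactly the standard way to establish this, and each step you outline goes through without issue. Note that the paper does not supply its own proof of this proposition but simply quotes it from \cite[Cor.~3.20]{MR2003221}; your write-up is essentially what that cited argument amounts to in the present two-dimensional setting.
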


Here is the main theorem of this section describing all of the square roots of $S^2$.

\begin{Theorem}\label{8UHG7TF}
For $Q \in \mathcal{B}(H^2)$ the following are equivalent. 
\begin{enumerate}
\item[(i)] $Q^2 = S^2$. 
\item[(ii)] There is a $2 \times 2$ constant unitary matrix $U$ and  functions $a ,b ,c \in H^\infty$ satisfying
  \begin{equation}\label{condi1} z a^2 + b  c =1
  \end{equation}
such that
 \begin{equation}\label{afor}Q=W^{*}U^*
\begin{bmatrix}
 z a &b \\
z c &-z a 
\end{bmatrix}
UW.\end{equation}
\end{enumerate}
\end{Theorem}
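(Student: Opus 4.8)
The plan is to move the whole problem to $H^2\oplus H^2$ via the unitary $W$ of Proposition \ref{554188uU}, so that $S^2$ becomes $S\oplus S$ and a square root $Q$ of $S^2$ becomes $R:=WQW^{*}$, a square root of $S\oplus S$. The direction (ii)$\Rightarrow$(i) is a direct computation: one squares the $2\times2$ operator matrix $\begin{bmatrix} za & b\\ zc & -za\end{bmatrix}$, using that the entries lie in $H^\infty$ so that the Toeplitz operators multiply entrywise, and checks that condition \eqref{condi1} forces the product to equal $\begin{bmatrix} z & 0\\ 0& z\end{bmatrix}=S\oplus S$ (the $(1,1)$ entry is $z^2a^2+bc\cdot z = z(za^2+bc)=z$, the off-diagonal entries cancel, and the $(2,1)$ entry is $zcza - zaz c$ ... wait, one must be a bit careful and compute $zc\cdot za + (-za)(zc) = 0$ and similarly for $(1,2)$, while the $(2,2)$ entry is $zc\cdot b + (-za)(-za) = z(cb + za^2)=z$). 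Conjugating by the constant unitary $U$ does not change the square, and conjugating back by $W^{*}$ gives $Q^2=S^2$.

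For the substantive direction (i)$\Rightarrow$(ii): from $Q^2=S^2$ and Lemma-type reasoning (an operator commutes with its own square), $Q$ commutes with $S^2$, hence $R=WQW^{*}$ commutes with $S\oplus S$. By Proposition \ref{bhubhuUJUJ}, $R=T_\Phi$ for some $\Phi=\begin{bmatrix}\Phi_{11}&\Phi_{12}\\\Phi_{21}&\Phi_{22}\end{bmatrix}\in M_2(H^\infty)$. Now $R^2=S\oplus S$ translates into the four scalar identities in $H^\infty$:
\[
\Phi_{11}^2+\Phi_{12}\Phi_{21}=z,\quad \Phi_{12}(\Phi_{11}+\Phi_{22})=0,\quad \Phi_{21}(\Phi_{11}+\Phi_{22})=0,\quad \Phi_{22}^2+\Phi_{12}\Phi_{21}=z.
\]
Subtracting the first and last gives $\Phi_{11}^2=\Phi_{22}^2$, so $\Phi_{11}=\pm\Phi_{22}$. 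The case $\Phi_{11}=\Phi_{22}$ must be excluded: then the first identity reads $\Phi_{11}^2+\Phi_{12}\Phi_{21}=z$; but also from $\det$ considerations or by evaluating at $0$ one sees $\Phi_{11}(0)^2 = 0$... actually the cleaner route is that if $\Phi_{11}=\Phi_{22}=:d$ then $2d\Phi_{12}=0$ and $2d\Phi_{21}=0$ from the middle equations (in $H^\infty$, which is an integral domain), so either $d\equiv 0$, forcing $\Phi_{12}\Phi_{21}=z$ which is impossible since $z$ has a simple zero at the origin while $\Phi_{12}\Phi_{21}$ would vanish to even order there, or $\Phi_{12}\equiv\Phi_{21}\equiv 0$, forcing $d^2=z$, again impossible by the odd-order-zero obstruction. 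Hence $\Phi_{22}=-\Phi_{11}$, the middle equations are automatically satisfied, and we are left with $\Phi_{11}^2+\Phi_{12}\Phi_{21}=z$. Writing $\Phi_{11}=:za$ is \emph{not} yet justified — rather, one argues that $\Phi_{11}$ must vanish at $0$ (since $\Phi_{11}(0)^2+\Phi_{12}(0)\Phi_{21}(0)=0$ and ... hmm, this needs the factorization argument), so $\Phi_{11}=z a$ with $a\in H^\infty$ only after we divide; the honest statement is that \eqref{condi1} $za^2+bc=1$ is what remains after setting $b=\Phi_{12}$, $c=\Phi_{21}$ and extracting the common factor. The role of the constant unitary $U$ in \eqref{afor} is a red herring for this direction — with $U=I$ one already gets the conclusion — but it is included because it widens the family in (ii) without leaving it (conjugation by constant unitaries preserves both $M_2(H^\infty)$ and the squaring relation), so a complete proof of the equivalence should note that the $U=I$ special case already exhausts all square roots and the general $U$ is absorbed.

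The main obstacle is the factorization step: showing that $\Phi_{11}$ is divisible by $z$ in $H^\infty$ and that the leftover data can be normalized into the form \eqref{condi1}. The identity $\Phi_{11}^2+\Phi_{12}\Phi_{21}=z$ says the left side has a \emph{simple} zero at $0$; since $\Phi_{11}^2$ vanishes to even order and $z$ vanishes to order $1$, we need $\Phi_{12}\Phi_{21}$ to "pick up" the odd part, which forces a careful bookkeeping of zero orders of $\Phi_{11},\Phi_{12},\Phi_{21}$ at the origin. I expect the clean way is: evaluate at $z=0$ to get $\Phi_{11}(0)^2+\Phi_{12}(0)\Phi_{21}(0)=0$; then differentiate, etc., or more slickly observe that $z - \Phi_{11}^2 = \Phi_{12}\Phi_{21}$ and compare the multiplicity of the zero at $0$ on both sides, concluding $\Phi_{11}(0)=0$, hence $\Phi_{11}=za$ for some $a\in H^\infty$, and then dividing the relation by $z$ yields precisely $za^2+bc=1$ with $b=\Phi_{12}$, $c=\Phi_{21}$. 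Once this normalization is in hand, writing $R=T_\Phi$ with $\Phi=\begin{bmatrix} za & b\\ c & -za\end{bmatrix}$ and undoing the conjugation $Q=W^{*}RW$ gives \eqref{afor} (with $U=I$), completing the proof.
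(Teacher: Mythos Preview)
Your reduction to $S\oplus S$ and the integral-domain manipulation of the four $H^\infty$ equations is a cleaner route than the paper's (which expands $\Phi=\sum_{k\geq 0} A_k(S\oplus S)^k$ and argues inductively that every coefficient matrix $A_k$ lies in the trace-zero subspace $\mathcal S$). But there is a genuine gap: your claim that the constant unitary $U$ is a ``red herring'' and that $\Phi_{11}(0)=0$ follows from the equations alone is false. Take
\[
\Phi=\begin{bmatrix}1 & z-1\\ 1 & -1\end{bmatrix}.
\]
Then $\Phi^2=zI_2$, yet $\Phi_{11}(0)=1$ and $\Phi_{21}(0)=1$, so this $\Phi$ is \emph{not} of the form $\begin{bmatrix}za&b\\zc&-za\end{bmatrix}$. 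Your multiplicity heuristic ``$z-\Phi_{11}^2=\Phi_{12}\Phi_{21}$, compare orders at $0$'' does not force $\Phi_{11}(0)=0$: here $z-1=(z-1)\cdot 1$ with no zero at the origin on either side. Relatedly, your final displayed matrix has $(2,1)$ entry $c$ rather than $zc$ as in \eqref{afor}; the theorem requires \emph{both} $\Phi_{11}$ and $\Phi_{21}$ to vanish at the origin, and your argument produces neither.

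The fix is exactly the unitary you dismissed. Evaluating $\Phi^2=zI$ at $z=0$ gives $\Phi(0)^2=0$, so $\Phi(0)$ is nilpotent; Schur's theorem supplies a constant unitary $U$ with $U\Phi(0)U^*$ upper triangular, hence strictly upper triangular. Replacing $\Phi$ by $U\Phi U^*$ (still a square root of $zI$, since constant matrices commute with $S\oplus S$) now forces $\Phi_{11}(0)=\Phi_{21}(0)=\Phi_{22}(0)=0$, and from here your argument goes through cleanly: $\Phi_{22}=-\Phi_{11}$, write $\Phi_{11}=za$, $\Phi_{21}=zc$, $\Phi_{12}=b$, and divide $z^2a^2+zbc=z$ by $z$ to get \eqref{condi1}. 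So $U$ is not decorative; it is precisely the normalization that makes the factorization step legitimate, and this is how the paper uses it too. (One side remark: your exclusion of the sub-case $d\equiv 0$ via ``$\Phi_{12}\Phi_{21}$ would vanish to even order'' is wrong---take $\Phi_{12}=z$, $\Phi_{21}=1$---but harmless, since $d=0$ already gives $\Phi_{22}=-\Phi_{11}$ anyway.)
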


Proposition \ref{554188uU} shows that to prove Theorem \ref{8UHG7TF}, it suffices to prove the following. 

\begin{Theorem}\label{bnnbhjbhj777y}
For $A \in \mathcal{B}(H^2 \oplus H^2)$ the following are equivalent. 
\begin{enumerate}
\item[(i)] $A^2 = S \oplus S$. 
\item[(ii)] There is a $2 \times 2$ constant unitary matrix $U$ and  functions $a ,b ,c \in H^\infty$ satisfying 
  \begin{equation}
   z a^2 + b  c =1
  \end{equation}
such that
 \begin{equation}\label{afor1}A= U^*
\begin{bmatrix}
 z a &b \\
z c &-z a 
\end{bmatrix}
U.\end{equation}
\end{enumerate}
\end{Theorem}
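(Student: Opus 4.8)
The plan is to prove the two implications directly. The key mechanism is Proposition~\ref{bhubhuUJUJ}: it turns the operator equation $A^2 = S \oplus S$ into the function‑theoretic equation $\Phi^2 = zI_2$ for a matrix $\Phi \in M_2(H^\infty)$, which we then normalize by conjugating with a constant unitary. The implication (ii)$\Rightarrow$(i) is a direct computation: with $M = \begin{bmatrix} za & b\\ zc & -za\end{bmatrix}\in M_2(H^\infty)$, scalarity of the entries makes the off‑diagonal entries of $M^2$ cancel, and each diagonal entry of $M^2$ equals $z(za^2+bc)$, which is $z$ by \eqref{condi1}; hence $M^2 = zI_2$. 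Using $T_\Phi T_\Psi = T_{\Phi\Psi}$ for $\Phi,\Psi \in M_2(H^\infty)$ (analytic Toeplitz operators compose by multiplying symbols), together with $T_U T_{U^*} = I$ and $T_{U^*}(zI_2)T_U = zI_2$, one gets $A^2 = T_{U^*}T_M\,(T_U T_{U^*})\,T_M T_U = T_{U^*}T_{M^2}T_U = T_{zI_2} = S\oplus S$.

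For (i)$\Rightarrow$(ii), observe first that $A^2 = S\oplus S$ forces $A(S\oplus S) = A^3 = (S\oplus S)A$, so $A$ lies in the commutant of $S\oplus S$; by Proposition~\ref{bhubhuUJUJ}, $A = T_\Phi$ for some $\Phi \in M_2(H^\infty)$, and $T_{\Phi^2} = A^2 = T_{zI_2}$ gives $\Phi^2 = zI_2$. Evaluating at the origin yields $\Phi(0)^2 = 0$, so the constant matrix $\Phi(0)$ is nilpotent; by Schur's unitary triangularization theorem there is a constant unitary $U$ with $U\Phi(0)U^*$ strictly upper triangular. Put $\Psi := U\Phi U^* \in M_2(H^\infty)$; since $U$ is constant, $\Psi^2 = U\Phi^2U^* = zI_2$, while $\Psi(0) = U\Phi(0)U^*$ is strictly upper triangular.

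It then remains to describe all $\Psi = \begin{bmatrix} p & b\\ q & d\end{bmatrix}\in M_2(H^\infty)$ with $\Psi^2 = zI_2$ and $p(0)=q(0)=d(0)=0$. Comparing entries in $\Psi^2 = zI_2$ gives $p^2+bq = z = d^2+qb$, hence $p^2 = d^2$, together with $b(p+d) = q(p+d) = 0$. Because $H^\infty$ is an integral domain, either $p = d$ — which, unless $p = d = 0$, forces $b = q = 0$ and the impossible identity $p^2 = z$ — or $p = -d$; in either surviving case $d = -p$, so $\Psi = \begin{bmatrix} p & b\\ q & -p\end{bmatrix}$ with $p^2+bq = z$. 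Since $p(0)=q(0)=0$, we may write $p = za$ and $q = zc$ with $a,c\in H^\infty$, and then $z^2a^2 + bzc = z$ gives $za^2+bc = 1$. Finally $\Phi = U^*\Psi U$, so $A = T_\Phi = T_{U^*}\,T_{\begin{bmatrix} za & b\\ zc & -za\end{bmatrix}}\,T_U$, which is exactly \eqref{afor1}.

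The only step that is not routine bookkeeping is the normalization: one must notice that conjugating by a constant unitary leaves $\Phi^2 = zI_2$ unchanged and concentrates the entire obstruction in the constant term $\Phi(0)$, that $\Phi(0)$ is automatically nilpotent, and hence that Schur triangularization makes the $(1,1)$, $(2,1)$, $(2,2)$ entries of the conjugated symbol vanish at $0$. Everything else — the cancellation in $M^2$, the commutant reduction, the integral‑domain case analysis, and the fact that an $H^\infty$ function vanishing at the origin is $z$ times an $H^\infty$ function — is straightforward, though one may wish to isolate the classification of solutions of $\Psi^2 = zI_2$ as a separate lemma.
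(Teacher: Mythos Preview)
Your proof is correct and follows the same overall skeleton as the paper --- reduce to the commutant via Proposition~\ref{bhubhuUJUJ}, then conjugate by a constant unitary chosen via Schur so that the constant term $\Phi(0)$ is strictly upper triangular --- but your core argument for the structure of $\Psi$ is genuinely different and cleaner. The paper expands $A=\sum_{k\geq 0}A_k(S\oplus S)^k$, squares this series, and matches coefficients to obtain the recursive system \eqref{11}--\eqref{14}; it then uses Lemma~\ref{techLemma} and induction to force every $A_k$ into the traceless subspace $\mathcal{S}$ of \eqref{zero}. You instead work directly with the single functional equation $\Psi^2=zI_2$ in $M_2(H^\infty)$: comparing entries gives $p^2=d^2$ and $b(p+d)=q(p+d)=0$, and the integral-domain property of $H^\infty$ immediately yields $d=-p$ (the alternative $b=q=0$ forcing the impossible $p^2=z$). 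This bypasses Lemma~\ref{techLemma} and the induction entirely, at the cost of using one global fact (no zero divisors in $H^\infty$) in place of the paper's purely linear-algebraic, coefficientwise machinery. Your route is shorter here; the paper's series approach is more hands-on and perhaps adapts more readily to situations where the symbol algebra is not an integral domain.
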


A matrix calculation shows that 
\begin{align*}
A^2 & = U^{*} \begin{bmatrix}
 z a &b \\
z c &-z a 
\end{bmatrix} U U^{*} \begin{bmatrix}
 z a &b \\
z c &-z a 
\end{bmatrix} U\\
& = U^{*} \begin{bmatrix}
 z^2 a^2 + z b c & 0 \\
0 & z^2 a^2 + z b c
\end{bmatrix} U\\
& = U^{*} \begin{bmatrix}
 z  & 0 \\
0  & z 
\end{bmatrix} U\\
& = U^{*} (S \oplus S) U\\
& = S \oplus S.
\end{align*}
In the above, we use the fact that any constant matrix commutes with $S \oplus S$. 
Thus every operator of the form \eqref{afor1} is a square root of $S \oplus S$. The rest of this section will be devoted to proving the converse -- and providing some instances of this characterization.

Our proof involves a few more preliminaries. The first is a simple fact about square roots of bounded Hilbert space operators. 

\begin{Lemma}\label{98yuijokl}
If $B \in \mathcal{B}(\mathcal{H})$ and $A^2 = B$, then $A \in \{B\}'$. 
\end{Lemma}

\begin{proof}
Note that $A B = A A^2 = A^2 A = B A$.
\end{proof}

Combining this with the discussion above, we see that if $Q \in \mathcal{B}(H^2)$ with $Q^2 = S^2$, then $W Q W^{*} \in \mathcal{B}(H^2 \oplus H^2)$ with $(W Q W^{*})^2 = S \oplus S$. It follows from Lemma \ref{98yuijokl} and Proposition \ref{bhubhuUJUJ} that 
$$W Q W^{*} = A, \quad A \in M_{2}(H^{\infty}).$$

To identify $A$, let us start with lemmas about $2\times 2$ matrices $M_2(\mathbb{C})$ of complex numbers. 
For $X,Y\in M_{2}(\C)$ let 
$$ [X,Y]^+ : = XY+YX.$$
One can quickly verify the following useful facts about the subspace 
\begin{equation}\label{zero}
\mathcal{S}=\Big\{\left[\begin{array}{cc}\aaa&\phantom{-}\bbb\\
\ccc&-\aaa\end{array}\right]\ \colon \aaa,\bbb,\ccc\in\mathbb{C}\Big\}.
\end{equation}

\begin{Lemma}\label{techLemma}
Let $\alpha, \beta, \gamma, \lambda \in \C$.
\begin{enumerate}
\item[(i)] If
$$X=
\begin{bmatrix}
\alpha & \beta\\
0 & \gamma
\end{bmatrix}
$$ and $X^2 = 0$, then $\alpha = \gamma = 0$, in other words, $X^2 = 0$ if and only if
$$X = 
\begin{bmatrix}
0 & \beta\\
0 & 0
\end{bmatrix}.$$
\item[(ii)]  If 
$$X = \begin{bmatrix}
0 & \beta\\
0 & 0
\end{bmatrix}$$
and $Y \in M_{2}(\mathbb{C})$ with $[X, Y]^{+} = \lambda I_2$ with $\lambda \not = 0$, then $\beta \not = 0$ and 
$$Y = 
\begin{bmatrix}
\alpha & \eta\\
\lambda/\beta & -\alpha
\end{bmatrix},$$
where $\alpha, \eta \in \mathbb{C}$ are arbitrary. 
\item[(iii)] If $X,Y\in\mathcal{S}$, then  $X^2$ and $[X,Y]^+\in \mathbb{C} I_{2}$.
\end{enumerate}
\end{Lemma}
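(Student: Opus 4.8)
The plan is to verify the three parts by direct computation, since each reduces to elementary $2 \times 2$ matrix algebra, and to organize the work so that part (ii) does the bulk of the lifting for what follows in the section. For part (i), I would simply square the upper-triangular matrix $X$ to get
$$X^2 = \begin{bmatrix} \alpha^2 & \alpha\beta + \beta\gamma\\ 0 & \gamma^2\end{bmatrix},$$
and read off that $X^2 = 0$ forces $\alpha^2 = \gamma^2 = 0$, hence $\alpha = \gamma = 0$; the converse is immediate since a strictly upper-triangular $2\times 2$ matrix is nilpotent of order two. The only thing to be careful about here is recording the ``if and only if'' in the stated normal form.

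For part (ii), with $X = \begin{bmatrix} 0 & \beta\\ 0 & 0\end{bmatrix}$ and $Y = \begin{bmatrix} p & q\\ r & s\end{bmatrix}$, I would compute $XY = \begin{bmatrix} \beta r & \beta s\\ 0 & 0\end{bmatrix}$ and $YX = \begin{bmatrix} 0 & \beta p\\ 0 & \beta r\end{bmatrix}$, so that
$$[X,Y]^{+} = XY + YX = \begin{bmatrix} \beta r & \beta(p + s)\\ 0 & \beta r\end{bmatrix}.$$
Setting this equal to $\lambda I_2$ with $\lambda \neq 0$ gives $\beta r = \lambda$ (forcing $\beta \neq 0$ and $r = \lambda/\beta$) and $\beta(p+s) = 0$, hence $s = -p$; the entry $q$ is unconstrained. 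Writing $p = \alpha$, $q = \eta$ then yields exactly the claimed form of $Y$. Part (iii) is the quickest: if $X = \begin{bmatrix} \alpha & \beta\\ \gamma & -\alpha\end{bmatrix} \in \mathcal{S}$, then by Cayley--Hamilton $X^2 = (\operatorname{tr} X)X - (\det X) I_2 = -(\det X) I_2 \in \mathbb{C} I_2$ since $\operatorname{tr} X = 0$; and for $X, Y \in \mathcal{S}$ the polarization identity $[X,Y]^{+} = (X+Y)^2 - X^2 - Y^2$ exhibits $[X,Y]^{+}$ as a combination of squares of elements of the subspace $\mathcal{S}$, each of which lies in $\mathbb{C} I_2$ by the first half of (iii).

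There is no real obstacle in this lemma — it is entirely computational — but the point requiring the most attention is making sure part (ii) captures the full solution set (in particular that $q$ really is free and that the only constraints are $r = \lambda/\beta$ and $s = -p$), since this is the step that will later pin down the precise shape of the matrix $A$ in Theorem \ref{bnnbhjbhj777y}. I would also flag that (iii) is the fact that makes the subspace $\mathcal{S}$ natural here: it is closed under the symmetrized product modulo scalars, which is exactly what is needed when one passes from scalar matrices to $H^\infty$-valued ones.
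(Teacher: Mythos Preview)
Your proof is correct in all three parts. The paper does not actually give a proof of this lemma: it simply states that ``one can quickly verify the following useful facts'' and moves on, so there is nothing to compare against beyond the implicit suggestion of direct computation. Your use of Cayley--Hamilton and the polarization identity $(X+Y)^2 - X^2 - Y^2$ in part (iii) is a clean way to avoid writing out the entries, and it makes transparent why the subspace $\mathcal{S}$ (the traceless $2\times 2$ matrices) interacts well with the symmetrized product.
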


For a sequence $(A_{k})_{k = 0}^{\infty}$, where $A_k \in M_{2}(\C)$ for all $k \geq 0$, consider the formal sum
$$A = \sum_{k = 0}^{\infty} A_k (S \oplus S)^{k}.$$
Each term $A_k (S \oplus S)^k$ belongs to  $\mathcal{B}(H^2 \oplus H^2)$ as does each partial sum of the series above. If we suppose that the series above converges in the strong operator topology, then $A \in \mathcal{B}(H^2 \oplus H^2)$. Suppose $U \in M_{2}(\C)$ is a constant unitary matrix. A simple $2 \times 2$ matrix calculation shows that 
$$U (S \oplus S)^{k} = (S \oplus S)^{k} U \quad \mbox{for all $k \geq 0$.}$$
This yields the important identity 
\begin{equation}\label{VBHJK}
U A U^{*} = \sum_{k = 0}^{\infty} U A_{k} U^{*} (S \oplus S)^{k}.
\end{equation}

\begin{proof}[Proof of Theorem \ref{8UHG7TF}]
We will prove Theorem \ref{bnnbhjbhj777y}. Proposition \ref{554188uU} will then imply Theorem \ref{8UHG7TF}. 

Let $A \in \mathcal{B}(H^2 \oplus H^2)$ with $A^2  = S \oplus S$. Lemma \ref{98yuijokl} and Proposition \ref{bhubhuUJUJ} together show that 
$$A = 
\begin{bmatrix} a & b\\ 
c & d\\
\end{bmatrix},$$
where 
$a $, $b $, $c $, $d \in H^\infty$. Let $a_k, b_k, c_k, d_k$ denote the Taylor coefficients of $a, b, c, d$ respectively and define 
$$A_{k} = 
\begin{bmatrix}
a_k & b_k\\
c_k & d_k
\end{bmatrix} \in M_{2}(\C), \quad k \geq 0.$$
Notice that 
$$A = \sum_{k = 0}^{\infty} A_{k} (S \oplus S)^k.$$
For the matrix $A_0$, Schur's theorem provides us with a unitary matrix $U$ such that $U A_0 U^{*}$ is upper triangular. By \eqref{VBHJK} (and unitary equivalence) we can always assume that $A$ is a square root of $(S \oplus S)$ with $A_0$ being upper triangular.  

Since  $A^2= S \oplus S$ then
\begin{align*}
S \oplus S & = A^2\\
& = \Big(\sum_{k=0}^\infty A_k (S \oplus S)^k\Big)^2\\
& =  \sum_{k=0}^\infty
\Big(\sum_{m=0}^{k} A_m\,A_{k-m}
  \Big) (S \oplus S)^{k}\\
 & = A_0^2 +  [A_0,A_1]^+ (S \oplus S)\\
 & \quad \quad  + \sum_{k=3,k\  \mbox{{\tiny odd}}}^\infty
\Big([A_0,A_k]^+ +\sum_{m = 1}^{\big[\frac k2\big]} [A_m\,,\,A_{k-m}]^+
  \Big) (S \oplus S)^{k}\\
  & \quad  \quad +\sum_{k=2,k\  \mbox{{\tiny even}}}^\infty
\Big([A_0,A_k]^++\sum_{m=1}^{\big[\frac k2\big]-1} [A_m\,,\,A_{k-m}]^+
 +A_{\frac k2}A_{\frac k2} \Big) (S \oplus S)^{k}.
\end{align*}
Comparing the operator coefficients in front of each $(S \oplus S)^{k}$ we have
\begin{align}
 A_0^2&=0,\label{11} \\
   [A_0,A_1]^+&=I_{2} \quad \mbox{($2 \times 2$ identity matrix)},\label{12}\\
  [A_0,A_k]^+&=-\sum_{m=1}^{\big[\frac k2\big]} [A_m\,,\,A_{k-m}]^+,\quad \text{for}\ k\geqslant 2, k\ \text{odd},\label{13}\\
  [A_0,A_k]^+&=-\sum_{m=1}^{\big[\frac k2\big]-1} [A_m\,,\,A_{k-m}]^+
 -A_{\frac k2}A_{\frac k2}\quad \text{for}\ k\geqslant 2, k\ \text{even}.\label{14}
  \end{align}
Now we will inductively find a formula for $A$. 

The matrix $A_0$ is upper triangular. By \eqref{11} and  Lemma \ref{techLemma}, 
$$A_{0} = 
\begin{bmatrix}
0 & b_0\\
0 & 0
\end{bmatrix}.
$$

By \eqref{12} and Lemma \ref{techLemma} we get $b_0\not=0$ and 
$$A_1 = 
\begin{bmatrix}
a_1 & b_1\\
1/\alpha & -a_1
\end{bmatrix}
$$
for arbitrary $a_1,b_1$. 

We will now use induction to prove that $A_k \in \mathcal{S}$. The base cases $A_0, A_{1}$ belong to $\mathcal{S}$. By Lemma \ref{techLemma}, right hand side of \eqref{13} or \eqref{14} are constant multiples of the identity operator $I$ on $H^2 \oplus H^2$. Thus, by Lemma \ref{techLemma},  $A_k \in \mathcal{S}$. 

By the expansion 
$$A = \sum_{k = 0}^{\infty} A_k (S \oplus S)^{k},$$
and the fact that each $A_{k} \in \mathcal{S}$, yields $a, b, c \in H^{\infty}$ such that 
$$A = \begin{bmatrix}
a & b\\
c & -a
\end{bmatrix}$$
with 
$a(0) = 0$, $c(0) = 0$ and $b(0) \not = 0.$
Since 
$$S \oplus S = \begin{bmatrix} z & 0\\ 0 & z\end{bmatrix} = \begin{bmatrix} 
a & b\\
c & -a
\end{bmatrix}^2 = \begin{bmatrix}
a^2 + b c & 0\\
 0 & a^2 + b c
\end{bmatrix},$$
it follows that $a^2 + b c = z$.
 Equivalently, by relabeling $a, b, c$, we can write 
 $$A = 
 \begin{bmatrix}
 z a & b\\
 z c & -za
 \end{bmatrix},$$
 where $a ,b , c \in  H^\infty$ with 
$
z a ^2+ b c =1.
$

The converse was shown earlier. 
\end{proof}

\begin{Remark}
\begin{enumerate}
\item[(i)] Since unitary operators preserve determinants, every square root $A$ of $S \oplus S$ will satisfy $\det A  = -z$. 
\item[(ii)] It follows from Proposition \ref{bhubhuUJUJ} and Proposition \ref{554188uU} that every $B \in \{S^2\}'$ is of the form 
$(B g)(z) = \phi(z) g_{e}(z) + \psi(z) g_{o}(z)$ for some $\phi, \psi \in H^{\infty}$. This is an interesting (and known) fact. 
\item[(iii)] Taking $U = I_2$ (the $2 \times 2$ identity matrix in $M_2(\C)$) in Theorem \ref{8UHG7TF} yields the following class of square roots $Q$ of $S^2$:
\begin{equation}\label{bsgifdgfd}
(Q g)(z)  = \big(z^2 a(z^2) + z^3 c(z^2)\big) g_{e}(z) 
 + \big(b(z^2) - z^3 a(z^2)\big) \frac{g_{o}(z)}{z},
 \end{equation}
where $z a^2 + b c = 1$. Setting $a \equiv 0, b \equiv 1, c \equiv 1$ ,we get 
$$(Q g)(z) = z^3 g_{e}(z) + \frac{g_{o}(z)}{z}.$$
With respect to the standard basis $(z^n)_{n = 0}^{\infty}$ for $H^2$, the operator $Q$ has the matrix representation 
$$[Q] = \begin{bmatrix}
\0 & 1 & \0 & \0 & \0 & \0 & \0 & \cdots\\
\0 & \0 & \0 & \0 & \0 & \0 &  \0 & \cdots\\
\0 & \0 & \0 & 1 & \0 & \0 &  \0 & \cdots\\
1 & \0 & \0 & \0 & \0 & \0 &\0 & \cdots\\
\0 & \0 & \0 & \0 & \0 & 1 & \0 & \cdots\\
\0 & \0 & 1 & \0 & \0 & \0 &\0 & \cdots\\
\0 & \0 & \0 & \0 & \0 & \0 & \0 & \cdots\\
\vdots & \vdots & \vdots & \vdots & \vdots & \vdots & \vdots & \ddots
\end{bmatrix}.$$
\item[(iv)] Taking 
$$U = \begin{bmatrix}
0 & 1\\
1 & 0
\end{bmatrix}
$$ in Theorem \ref{8UHG7TF}, yields another class of square roots $Q$ of $S^2$:
\begin{equation}\label{098789u878}
(Q g)(z) = (z b(z^2) - z^2 a(z^2)) g_{e}(z) + (z^2 a(z^2) + z c(z^2)) g_{o}(z).
\end{equation}
Setting $a \equiv 1$, $b(z) = \sqrt{1 - z}$, $c(z) = \sqrt{1 - z}$, this becomes 
$$(Q g)(z) = (z \sqrt{1 - z^2} - z^2) g_{e}(z) + (z^2 + z \sqrt{1 - z^2}) g_{e}(z).$$
With respect to the standard basis, $Q$ has the matrix representation, 
$$[Q] = \begin{bmatrix}
\phantom{-} 0 & \phantom{-}  0 & \phantom{-}  0 & \phantom{-}  0 & \phantom{-}  0 & \phantom{-}  0 & \phantom{-} 0 & \phantom{-} 0 & \phantom{-}  0  & \cdots\\
 \phantom{-} 1 & \phantom{-}  0 & \phantom{-}  0 & \phantom{-}  0 & \phantom{-}  0 & \phantom{-}  0 & \phantom{-}  0 & \phantom{-}  0 & \phantom{-}  0 & \cdots\\
 -1 & \phantom{-}  1 & \phantom{-}  0 & \phantom{-}  0 & \phantom{-}  0 & \phantom{-}  0 & \phantom{-}  0 & \phantom{-}  0 & \phantom{-}  0 & \cdots\\
 -\frac{1}{2} & \phantom{-}  1 & \phantom{-}  1 & \phantom{-}  0 & \phantom{-}  0 & \phantom{-}  0 & \phantom{-}  0 & \phantom{-}  0 & \phantom{-}  0 & \cdots\\
 \phantom{-}  0 & -\frac{1}{2} & -1 & \phantom{-}  1 & \phantom{-}  0 & \phantom{-}  0 & \phantom{-}  0 & \phantom{-}  0 & \phantom{-}  0& \cdots \\
 -\frac{1}{8} & \phantom{-}  0 & -\frac{1}{2} & \phantom{-}  1 & \phantom{-}  1 & \phantom{-}  0 & \phantom{-}  0 & \phantom{-}  0 & \phantom{-}  0 & \cdots\\
\phantom{-}   0 & -\frac{1}{8} & \phantom{-} 0 & -\frac{1}{2} & -1 & \phantom{-}  1 & \phantom{-}  0 & \phantom{-}  0 & \phantom{-}  0 & \cdots\\
 -\frac{1}{16} & \phantom{-}  0 & -\frac{1}{8} & \phantom{-}  0 & -\frac{1}{2} & \phantom{-}  1 & \phantom{-}  1 & \phantom{-}  0 & \phantom{-}  0 & \cdots \\
 \phantom{-}  0 & -\frac{1}{16} & \phantom{-}  0 & -\frac{1}{8} & \phantom{-}  0 & -\frac{1}{2} & -1 & \phantom{-}  1 & \phantom{-}  0 & \cdots \\
 \phantom{-} \vdots &  \phantom{-}\vdots & \phantom{-} \vdots &  \phantom{-}\vdots & \phantom{-}  \vdots &  \phantom{-}\vdots &  \phantom{-} \vdots &  \phantom{-}\vdots  &  \phantom{-}\vdots & \ddots
\end{bmatrix}.
$$
\item[(v)] In \eqref{098789u878} take $a \equiv 0$ and $b = c \equiv 1$ to get 
$$(Q g)(z) = z g_{e}(z) + z g_{0}(z) = z g(z)$$ which is just the ``obvious'' square root of $S^2$, namely $S$. 
\item[(vi)] If $a(\D) \subset \D$ (a analytic self map of $\D$), then $1 - z a(z)^2$ is outer and thus $\sqrt{1 -z a(z)^2}$ is a bounded analytic function on $\D$. With $b(z) = c(z) = \sqrt{1 - z a(z)^2}$ (and e.g., $U = I_2$), we can produce a rich class of square roots $Q$ from \eqref{bsgifdgfd} and \eqref{098789u878} as 
\begin{align*}
(Q g)(z) &= \Big(z^2 a(z^2) + z^3 \sqrt{1 - z^2 a(z^2)^2}\Big) g_{e}(z) \\
& \quad \quad  + \Big(\sqrt{1 - z^2 a(z^2)^2} - z^2 a(z^2)\Big) \frac{g_o(z)}{z}.
\end{align*}
\end{enumerate}
\end{Remark}

This brings us to a brief  comment as to when the square root of $S^2$ is a (analytic) Toeplitz operator. Here is a general fact concerning Toeplitz operators. For $\phi \in L^{\infty}(\T)$, define the Toeplitz operator $T_{\phi}$ on $H^2$ by 
$T_{\phi} f = P_{+}(\phi f)$, where $P_{+}$ is the orthogonal projection (the Riesz projection) of $L^2(\T)$ onto $H^2$. See \cite[Ch.~4]{MR3526203} for the basics of Toeplitz operators on $H^2$. 

\begin{Theorem} \label{T:square-toeplitz}
For $\phi \in L^{\infty}(\T)$, the following are equivalent. 
\begin{enumerate}
\item[(i)] There is a Toeplitz operator $T$ such that $T^2 = T_{\phi}$.
\item[(ii)] $\phi = \psi^2$ for some $\psi  \in H^{\infty}$.
\item[(iii)] $\phi = \overline{\psi^2}$ for some $\psi \in H^{\infty}$. 
\end{enumerate}
\end{Theorem}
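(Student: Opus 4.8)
The plan is to prove $(ii) \Rightarrow (i)$ and $(iii) \Rightarrow (i)$ first, since these are easy, and then concentrate on the only substantive implication, $(i) \Rightarrow (ii)\text{ or }(iii)$. For the easy directions, if $\phi = \psi^2$ with $\psi \in H^\infty$, then $T_\psi$ is a Toeplitz operator with $T_\psi^2 = T_{\psi^2} = T_\phi$ (multiplicativity of analytic Toeplitz symbols); similarly if $\phi = \overline{\psi^2}$, then $T_{\overline\psi}^2 = T_{\overline\psi\,\overline\psi} = T_{\overline{\psi^2}} = T_\phi$, using that co-analytic Toeplitz symbols also multiply. So the work is entirely in showing that the existence of a Toeplitz square root forces $\phi$ to be an analytic square or a co-analytic square.

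For the main implication, suppose $T = T_\psi$ with $\psi \in L^\infty(\T)$ and $T_\psi^2 = T_\phi$. I would use the standard characterization of Toeplitz operators, $S^* T S = T$, together with the rank-one perturbation identity $I = S S^* + 1 \otimes 1$ (where $1 \otimes 1$ is the projection onto the constants). Applying $S^*(\cdot)S$ to $T_\phi = T_\psi T_\psi$ and inserting $I = SS^* + 1\otimes 1$ between the two factors gives
\begin{align*}
T_\phi = S^* T_\phi S &= S^* T_\psi (S S^* + 1 \otimes 1) T_\psi S\\
&= (S^* T_\psi S)(S^* T_\psi S) + (S^* T_\psi 1) \otimes (S^* T_\psi^* 1)\\
&= T_\phi + (S^* T_\psi 1) \otimes (S^* T_{\overline\psi} 1),
\end{align*}
so the rank-one operator $(S^* T_\psi 1) \otimes (S^* T_{\overline\psi} 1)$ vanishes. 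Hence either $S^* T_\psi 1 = 0$ or $S^* T_{\overline\psi} 1 = 0$. Now $T_\psi 1 = P_+\psi$ and $S^* P_+\psi = 0$ says exactly that $P_+\psi$ is constant, i.e. $\psi \in \overline{H^\infty}$; symmetrically $S^* T_{\overline\psi} 1 = 0$ says $\psi \in H^\infty$. In either case $\psi$ is one-sided, so $T_\psi^2 = T_{\psi^2}$ and therefore $\phi = \psi^2$: if $\psi \in H^\infty$ this is conclusion $(ii)$, and if $\psi \in \overline{H^\infty}$, writing $\psi = \overline{\eta}$ with $\eta \in H^\infty$ gives $\phi = \overline{\eta}^2 = \overline{\eta^2}$, which is conclusion $(iii)$.

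The main obstacle is essentially bookkeeping rather than a deep difficulty: one must be careful that $S^* T_\psi S = T_\psi$ genuinely holds for a general (not necessarily analytic) Toeplitz symbol — this is the Brown–Halmos characterization — and that the rank-one term is computed with the correct adjoint, namely $(S^* T_\psi 1) \otimes (S^* T_\psi^* 1)$ with $T_\psi^* = T_{\overline\psi}$. I would also remark that the argument simultaneously yields uniqueness up to sign of the Toeplitz square root, since $\psi$ is then determined as the appropriate square root of $\phi$ inside $H^\infty$ (or $\overline{H^\infty}$), using that inner-outer factorization is unique; but for the stated theorem only the characterization of $\phi$ is required.
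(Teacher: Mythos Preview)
Your proposal is correct and follows essentially the same route as the paper: both use the Brown--Halmos relation $S^*T_\psi S=T_\psi$ together with $I=SS^*+1\otimes 1$ to produce the rank-one identity $(S^*T_\psi 1)\otimes(S^*T_{\overline\psi}1)=0$, and then read off that $\psi$ is analytic or co-analytic. Your added remarks (the easy implications and the uniqueness-up-to-sign comment) go slightly beyond what the paper writes out, but the core argument is identical.
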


Below we will use the tensor product $\vec{x} \otimes \vec{y}$ of two vectors in a Hilbert space $\h$. This is the rank-one operator on $\h$ defined by $(\vec{x} \otimes \vec{y})(\vec{z}) = \langle \vec{z}, \vec{y}\rangle \vec{x}$.

\begin{proof}[Proof of Theorem \ref{T:square-toeplitz}]

Let $B=T_\psi$, $\psi\in L^\infty(\mathbb{T})$, denote a (Toeplitz) square root of $T_{\varphi}$, i.e., $T_{\varphi} = (T_\psi)^2$. By a theorem of Brown--Halmos \cite[Ch.~4]{MR3526203},
\[
S^*T_{\psi} S=T_{\psi}
\quad \mbox{and} \quad
S^*T_{\varphi} S= T_{\varphi}.
\]
Recall that $I = SS^*+1\otimes 1$ and thus 
\begin{align*}
T_\varphi &= S^*T_\varphi S\\
&= S^*T_\psi T_\psi S\\
&= S^*T_\psi (SS^*+1\otimes 1)T_\psi S\\
&= S^*T_\psi S\,S^*T_\psi S+S^*T_\psi \,(1\otimes 1)\,T_\psi S\\
&=(T_\psi)^2+(S^*T_\psi1)\otimes (S^*T_\psi^*\,1)\\
&=T_\varphi+(S^*T_\psi1)\otimes (S^*T_\psi^*\,1)\\
&=T_\varphi+(S^*T_\psi1)\otimes (S^*T_{\bar\psi}\,1),
\end{align*}
which implies
\[
(S^*T_\psi1)\otimes (S^*T_{\bar\psi}\,1) = 0.
\]
Therefore, either $S^*T_\psi 1=0$ or $S^*T_{\bar\psi}1=0$. These identities respectively mean $P_+\psi$ or $P_+\bar\psi$ are constant functions. We can rephrase these conditions as $\psi\in \overline{H^\infty}$ or $\psi\in H^\infty$.
However, under any of these two conditions,
\[
T_\varphi= \big( T_\psi \big)^2=T_{\psi^2},
\]
which implies $\varphi=\psi^2$. In the former case, when $\psi\in \overline{H^\infty}$, we may replace $\psi$ by $\bar{\psi}$ so that always $\psi$ is an analytic function and then either $\varphi = \psi^2$ or $\varphi = \overline{\psi}^2$.
\end{proof}

The previous theorem, along with the standard inner-outer factorization of $H^{\infty}$ functions yields the following corollary.

\begin{Corollary}\label{sq:from:anal}
For $\varphi\in H^\infty$, the analytic Toeplitz operator $T_\varphi$ has a square root in the Toeplitz operators if and only if all zeros of $\varphi$ inside the open unit disc $\mathbb{D}$ are of even degrees.
\end{Corollary}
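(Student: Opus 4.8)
The plan is to deduce Corollary \ref{sq:from:anal} directly from Theorem \ref{T:square-toeplitz} together with the canonical inner-outer factorization. The forward direction is immediate: if $T_\varphi$ has a Toeplitz square root, then by the equivalence (i)$\Leftrightarrow$(ii) in Theorem \ref{T:square-toeplitz} (the case (iii) being ruled out for a nonzero analytic $\varphi$, since $\overline{\psi^2}\in H^\infty$ forces $\psi$ constant), we have $\varphi=\psi^2$ for some $\psi\in H^\infty$; hence every zero of $\varphi$ in $\D$ occurs with even multiplicity, being double the multiplicity of the corresponding zero of $\psi$.

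For the converse, suppose $\varphi\in H^\infty$ has all its zeros in $\D$ of even order. First dispose of the trivial case $\varphi\equiv 0$, where $\psi\equiv 0$ works. Otherwise write the canonical factorization $\varphi = B\,S_\mu\,F$, where $B$ is the Blaschke product carrying the zeros of $\varphi$, $S_\mu$ is the singular inner factor, and $F$ is outer. The hypothesis on the zeros means $B = B_0^2$ for a Blaschke product $B_0$ (group the zeros into pairs). The singular inner function $S_\mu = \exp\!\big(-\!\int \frac{t+z}{t-z}\,d\mu(t)\big)$ has an obvious analytic square root $S_{\mu/2}=\exp\!\big(-\tfrac12\!\int \frac{t+z}{t-z}\,d\mu(t)\big)$, which is again a singular inner function and hence in $H^\infty$. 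Likewise, an outer function $F$ never vanishes on $\D$ and is given by $F = \exp\!\big(\int \frac{t+z}{t-z}\log|\varphi(t)|\,dm(t)+ic\big)$ up to a unimodular constant, so one can extract a well-defined analytic square root $F^{1/2}$ (e.g. via the exponential formula with half the exponent), which lies in $H^\infty$ because $|F^{1/2}|^2=|F|\le\|\varphi\|_\infty$ on $\T$ in the boundary sense. Setting $\psi := B_0\,S_{\mu/2}\,F^{1/2}\in H^\infty$ gives $\psi^2=\varphi$, and then $T_\psi^2 = T_{\psi^2}=T_\varphi$ since products of analytic Toeplitz operators multiply symbols. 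By Theorem \ref{T:square-toeplitz} this exhibits the desired Toeplitz square root.

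I expect no genuine obstacle here; the only point requiring a little care is the existence of the analytic square roots of the singular inner and outer factors, which is standard because both are defined through exponentials of Herglotz-type integrals and halving the exponent preserves the relevant boundedness. One should also remark that the constant unimodular factor in the inner-outer decomposition has a square root in $\T$, so it causes no trouble, and that the construction makes $\psi$ bounded with $\|\psi\|_\infty = \|\varphi\|_\infty^{1/2}$. A brief sentence noting that this matches the $\pm$ ambiguity (one may replace $\psi$ by $-\psi$) rounds out the argument.
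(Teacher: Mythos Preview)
Your proposal is correct and follows essentially the same route as the paper: invoke Theorem \ref{T:square-toeplitz} for the forward direction to get $\varphi=\psi^2$ (ruling out the conjugate case since $\varphi\in H^\infty$), and for the converse use the canonical inner--outer factorization, writing the Blaschke factor as a square and taking analytic square roots of the singular inner and outer factors via their exponential formulas. Your version is slightly more detailed (explicitly treating $\varphi\equiv 0$, the unimodular constant, and the boundedness of $F^{1/2}$), but the argument is the same.
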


We end this section with the remark that $S^{2n}$ has infinitely many square roots since $S^{2 n}$ is unitarily equivalent to $(S \oplus S)^{(n)}$, and we already know that $S \oplus S$ has infinitely many square roots. However $S^{2 n + 1}$ does not have any square roots. We will discuss these results and some others in a forthcoming paper. 

\section{Square roots of the Volterra operator}

The Volterra operator 
$$(V f)(x) = \int_{0}^{x} f(t)\,dt$$ is a well-known bounded operator on $L^2[0,1]$ with a known square root \cite[p.~81]{MR2003221}
\begin{equation}\label{YYYYYY}
(Y f)(x) = \frac{1}{\sqrt{\pi}} \int_{0}^{x} \frac{f(t)}{\sqrt{x - t}} dt.
\end{equation}
One can prove this using the Laplace transform and convolutions. For the sake of completeness, and since the ideas of the proof will be used in the next section, we give an exposition of the following result of Sarason from \cite{MR208383}. Our presentation will be somewhat different from Sarason.

\begin{Theorem}[Sarason]
The operators $\pm Y$ are the only two square roots of $V$.
\end{Theorem}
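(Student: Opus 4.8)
The plan is to reduce the statement to the Titchmarsh convolution theorem. By Lemma~\ref{98yuijokl}, any $B \in \B(L^2[0,1])$ with $B^2 = V$ lies in the commutant $\{V\}'$, and since $Y^2 = V$ we likewise have $Y \in \{V\}'$; moreover, by the spectral mapping theorem and $\sigma(V) = \{0\}$ we get $\sigma(B) = \{0\}$. I would then recall the classical description of the commutant of $V$: realizing $V$ as the truncated convolution $(Vf)(x) = (\mathbf 1 * f)(x) = \int_0^x f(t)\,dt$ on $L^2[0,1]$ (with $\mathbf 1$ the constant function and $(\phi*f)(x) = \int_0^x \phi(x-t)f(t)\,dt$), every operator commuting with $V$ acts as a truncated convolution operator $T_\mu$, $T_\mu f = \mu * f$, for a finite Borel measure $\mu$ on $[0,1]$ — a fact going back to Sarason and resting on the unicellularity of $V$. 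In particular $Y = T_{\phi_0}$ with $\phi_0(t) = (\pi t)^{-1/2} \in L^1[0,1]$ directly from \eqref{YYYYYY}, and a Beta-function computation gives $\phi_0 * \phi_0 = \mathbf 1$, which re-proves $Y^2 = V$.

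Writing $B = T_\mu$ and using that $T$ is injective on measures (if $T_\nu = 0$ then $\nu([0,x]) = (\nu*\mathbf 1)(x) = 0$ for all $x$, so $\nu = 0$), the equation $B^2 = V$ becomes $\mu * \mu = \mathbf 1\,dx$ on $[0,1]$. Since convolution is commutative, $(\mu - \phi_0\,dx) * (\mu + \phi_0\,dx) = \mu*\mu - \phi_0*\phi_0\,dx = 0$ on $[0,1]$. The Titchmarsh convolution theorem (in its version for measures) now yields $\alpha,\beta \ge 0$ with $\alpha + \beta \ge 1$ such that $\mu = \phi_0\,dx$ on $[0,\alpha)$ and $\mu = -\phi_0\,dx$ on $[0,\beta)$. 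On $[0,\min(\alpha,\beta))$ both hold, forcing $\phi_0 \equiv 0$ there; since $\phi_0(t) = (\pi t)^{-1/2} > 0$ on $(0,1)$, this interval is empty, i.e.\ $\alpha = 0$ or $\beta = 0$. If $\alpha = 0$, then $\beta \ge 1$, so $\mu = -\phi_0\,dx$ on $[0,1)$, hence $\mu = -\phi_0\,dx + c\,\delta_1$; but $T_{\delta_1}f(x) = f(x-1) = 0$ for $x \in [0,1)$, so $T_{\delta_1} = 0$ on $L^2[0,1]$ and $B = -Y$. Symmetrically, $\beta = 0$ gives $B = Y$. The reverse implication $(\pm Y)^2 = V$ is immediate.

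The step I expect to be the real obstacle is the commutant description and the attendant bookkeeping: one must justify cleanly that every operator commuting with $V$ is a convolution operator by a measure on $[0,1]$ (so that $B$ automatically commutes with $Y$ and Titchmarsh applies), and check that any singular mass of $\mu$ at the right endpoint is invisible on $L^2[0,1]$. A self-contained route would use the cyclicity of $V$ (cyclic vector $\mathbf 1$), the fact that $\{V\}'$ equals the weak-operator-closed algebra generated by $I$ and $V$, and an identification of the weak closure of the polynomials in $V$ with zero constant term with truncated convolutions by measures. The remaining ingredients — Lemma~\ref{98yuijokl}, the spectral-mapping remark, and the two-line Titchmarsh computation — are routine. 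This convolution-plus-Titchmarsh mechanism is precisely what will be reused for the compressed-shift models in the next section.
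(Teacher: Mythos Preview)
Your strategy---factor $(B-Y)(B+Y)=0$ in the commutative algebra $\{V\}'$ and then invoke a support--additivity result---is exactly the shape of the paper's argument. The gap is where you place it, but it is worse than an unfinished justification: the assertion that $\{V\}'$ consists of truncated convolutions $T_\mu$ by finite Borel measures on $[0,1]$ is false. Sarason's result is that $\{V\}'\cong H^\infty/\Theta H^\infty$ (via the unitary $J$ and the identification $\{S_\Theta\}'=\{A_\psi:\psi\in H^\infty\}$). Under the Laplace--Cayley transfer a convolution $T_\mu$ corresponds to the symbol $\hat\mu(s)=\int_{[0,1]} e^{-st}\,d\mu(t)$; since two representatives of a class in $H^\infty/\Theta H^\infty$ differ (in the half-plane variable) by an element of $e^{-s}H^\infty$, which tends to $0$ as $s\to+\infty$, every representative of $T_\mu$ has the limit $\mu(\{0\})$ along the positive real axis. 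But $F(s)=\sin(\log(s+1))$ lies in $H^\infty$ of the right half-plane (equivalently $\psi(z)=\sin\!\big(\log\tfrac{2}{1-z}\big)\in H^\infty(\mathbb{D})$) and has no such limit; the associated operator in $\{V\}'$ is not $T_\mu$ for any finite measure. Without the representation $B=T_\mu$, Titchmarsh never gets off the ground, and your proposed route to it---identifying the WOT closure of polynomials in $V$ with convolutions by measures---cannot succeed.

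The paper's proof is your Titchmarsh argument run in the model where the commutant \emph{is} correctly described. One writes $B\leftrightarrow A_\psi$ with $\psi^2-\phi\in\Theta H^\infty$, factors $(\psi-\sqrt\phi)(\psi+\sqrt\phi)=\Theta h$, and uses that the only inner divisors of the single-atom singular function $\Theta$ are the powers $\Theta^\gamma$: if $\Theta^{\gamma_1}\mid(\psi+\sqrt\phi)$ and $\Theta^{\gamma_2}\mid(\psi-\sqrt\phi)$ maximally, then $\gamma_1+\gamma_2\ge 1$, while their difference $2\sqrt\phi$ is outer, forcing $\min(\gamma_1,\gamma_2)=0$ and hence $A_\psi=A_{\pm\sqrt\phi}$. ``Divisible by $\Theta^\gamma$'' corresponds exactly to ``convolution kernel supported in $[\gamma,1]$'' under the Sarason transfer, so this \emph{is} Titchmarsh in disguise; your version simply lives in a subalgebra that is not known a priori to contain $B$. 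A repair via convolution by distributions together with Lions' extension of Titchmarsh is conceivable, but carrying it out rigorously amounts to rebuilding Sarason's model.
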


 If $\Theta$ is the atomic inner function
$$\Theta(z) = \exp\Big(\frac{z + 1}{z - 1}\Big),$$
a result of Sarason \cite{MR192355} (see also \cite[Ch.~4]{MR2003221}), shows that for $g \in L^{2}[0, 1]$, the function 
$$(J g)(z) = \frac{i \sqrt{2}}{z - 1} \int_{0}^{1} g(t) \Theta(z)^t\,dt, \quad z \in \D,$$ belongs to the model space $K_{\Theta} = H^{2} \cap (\Theta H^2)^{\perp}$ and the operator $J: L^{2}[0, 1] \to \K_{\Theta}$ is unitary. Since $\sigma(V) = \{0\}$, it follows that 
$(I - V)(I + V)^{-1}$ is a bounded operator on $L^2[0, 1]$. The same paper says that 
\begin{equation}\label{6ttWoLk}
J (I - V)(I + V)^{-1} J^{*} = S_{\Theta},
\end{equation}
where $S_{\Theta}$ is  the compression of $S$ to $\K_{\Theta}$, that is $S_{\Theta} = P_{\Theta} S|_{\K_{\Theta}}$, where $P_{\Theta}$ is the orthogonal projection of $H^2$ onto $\K_{\Theta}$. It follows that 
$\sigma(S_{\Theta}) = \{1\}$ and thus
$(I - S_{\Theta}) (I + S_{\Theta})^{-1}$ is a bounded operator  on $\K_{\Theta}$. The compressed shift $S_{\Theta}$ has an $H^{\infty}$ functional calculus in that $\phi(S_{\phi})$ is a well-defined bounded operator on $\K_{\Theta}$ for any $\phi \in H^{\infty}$ \cite[Ch.~9]{MR3526203}.  

For $\psi \in H^{\infty}$, the operator $\psi(S_{\Theta})$ can be written as a truncated Toeplitz operator. Indeed, for any $\psi \in L^{\infty}(\T)$, define the operator $A_{\psi}$ on $\K_{\Theta}$ by $A_{\psi} f = P_{\Theta}(\psi f)$, where $P_{\Theta}$ denotes orthogonal projection of $L^2(\T)$ onto $\K_{\Theta}$ (where we regard $\K_{\Theta}$, via radial boundary values, as a subspace of $L^2(\T)$). Let us record some facts about truncated Toeplitz operators that will be used below. One can find their proofs in \cite{MR3526203} or \cite{MR208383}. 

\begin{Proposition}\label{9Pms 67tyiuwerthbvssS}
Let $\phi \in H^{\infty}$ and $\psi \in L^{\infty}(\T)$. 
\begin{enumerate}
\item[(i)] $A_{z} = S_{\Theta}$. 
\item[(ii)] $A_{\psi} = 0$ if and only if $\psi \in \Theta H^{2} + \overline{\Theta H^{2}}$. 
\item[(iii)] $\phi(S_{\Theta}) = A_{\phi}$. 
\item[(iv)] $\{S_{\Theta}\}' = \{A_{\phi}: \phi \in H^{\infty}\}$. 
\end{enumerate}
\end{Proposition}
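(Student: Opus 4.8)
The plan is to prove the four assertions in the order (i), (iii), (iv), (ii), since the later parts build on the earlier ones, and to isolate the two genuinely substantive points. Assertion (i) is immediate: for $f \in K_\Theta \subset H^2$ we have $Sf = zf \in H^2$, so the two projections $P_\Theta$ (on $H^2$ and on $L^2(\T)$) agree on $zf$, giving $S_\Theta f = P_\Theta(zf) = A_z f$.

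For (iii), I would first establish the multiplicativity $A_\phi A_\psi = A_{\phi\psi}$ for $\phi,\psi \in H^\infty$. For $f \in K_\Theta$, write $P_\Theta(\psi f) = \psi f - \Theta h$ with $h \in H^2$, using that the orthogonal complement of $K_\Theta$ inside $H^2$ is $\Theta H^2$; then $\phi P_\Theta(\psi f) = \phi\psi f - \Theta(\phi h)$, and since $\Theta(\phi h) \in \Theta H^2$ lies in the kernel of $P_\Theta$, applying $P_\Theta$ yields $A_\phi A_\psi f = A_{\phi\psi} f$. Together with $A_1 = I$ and part (i), this shows $\phi \mapsto A_\phi$ is a unital homomorphism agreeing with $p \mapsto p(S_\Theta)$ on polynomials $p$; I would then extend to all of $H^\infty$ by the limit procedure defining the $H^\infty$ functional calculus of $S_\Theta$ (see \cite[Ch.~9]{MR3526203}), obtaining $\phi(S_\Theta) = A_\phi$.

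Assertion (iv) then splits. The inclusion $\supseteq$ is a one-line consequence of multiplicativity, since $A_\phi A_z = A_{z\phi} = A_z A_\phi$ shows every $A_\phi$ commutes with $S_\Theta = A_z$. For $\subseteq$ I would invoke the commutant lifting (Sarason) mechanism: an operator $X$ commuting with $S_\Theta$ lifts to an operator on $H^2$ commuting with $S$, which by Proposition \ref{w7iruyghdfs} is an analytic Toeplitz operator $T_\phi$ with $\phi \in H^\infty$; compressing $T_\phi$ to $K_\Theta$ returns $A_\phi$ and recovers $X$. For (ii), I would pass to the orthogonality reformulation: for $f,g \in K_\Theta$, $\langle A_\psi f, g\rangle = \langle \psi f, g\rangle = \langle \psi, \bar f g\rangle$, so $A_\psi = 0$ if and only if $\psi$ is orthogonal to the closed span of $\{\bar f g : f,g \in K_\Theta\}$. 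The easy direction is a Fourier-support computation: if $\psi = \Theta\alpha + \overline{\Theta\beta}$ with $\alpha,\beta \in H^2$, then using $\bar\Theta f, \bar\Theta g \in (H^2)^\perp$ for $f,g \in K_\Theta$ one checks that both $\langle \Theta\alpha f, g\rangle$ and $\langle \overline{\Theta\beta}f, g\rangle$ vanish, because the relevant Fourier supports (nonnegative versus strictly negative) do not overlap. The reverse direction requires identifying the complement exactly, i.e. proving $(\Theta H^2 + \overline{\Theta H^2})^\perp = \overline{\operatorname{span}}\{\bar f g : f,g \in K_\Theta\}$, where the left side computes to $\{\psi : \Theta\psi \in zH^2,\ \bar\Theta\psi \in \overline{zH^2}\}$.

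I expect the two hard points to be the inclusion $\subseteq$ in (iv) and the reverse direction of (ii). The former is precisely Sarason's commutant theorem and genuinely needs the lifting theorem rather than a bare-hands argument. The latter needs the nontrivial fact that the products $\bar f g$ with $f,g \in K_\Theta$ fill out the entire complement of $\Theta H^2 + \overline{\Theta H^2}$; this is most cleanly handled by the Fourier-support bookkeeping above together with a density argument, with care taken about $L^2$ versus $L^\infty$ integrability of the products. Everything else is routine.
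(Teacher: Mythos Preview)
Your proposal is correct and follows the standard route to these facts, but you should be aware that the paper does not actually prove this proposition: it is stated as a list of known results about truncated Toeplitz operators, with the proofs deferred to the references \cite{MR3526203} and \cite{MR208383}. So there is no ``paper's own proof'' to compare against beyond those citations.

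That said, your outline matches what one finds in those sources. The multiplicativity $A_\phi A_\psi = A_{\phi\psi}$ for analytic symbols via the decomposition $\psi f = P_\Theta(\psi f) + \Theta h$ is exactly the standard computation, and the identification of $\{S_\Theta\}'$ is indeed Sarason's generalized interpolation theorem, which you correctly flag as the nontrivial input. For (ii), your duality reformulation is the usual one; the integrability issue you mention is typically handled by testing against reproducing kernels $k_\lambda^\Theta$ (which are bounded) rather than arbitrary $f,g \in K_\Theta$, so that $\bar f g \in L^\infty$ and the pairing $\langle \psi, \bar f g\rangle$ is an honest $L^2$ inner product. The reverse implication in (ii) then reduces to showing that $\overline{k_\lambda^\Theta}\, k_\mu^\Theta$ span the orthogonal complement of $\Theta H^2 + \overline{\Theta H^2}$, which is a direct computation with the explicit kernel formula. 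Your assessment of which parts are routine and which require real input is accurate.
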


Though the operator $(I - S_{\Theta}) (I + S_{\Theta})^{-1}$ is well defined, we need to represent it as a truncated Toeplitz operator with an $H^{\infty}$ symbol. This is accomplished with the following. 

\begin{Proposition}\label{poiurgty887}
If 
$$\phi(z) = \frac{1 - z}{1 + z + \Theta(z)},$$ then $\phi \in H^{\infty}$, is outer, and 
$A_{\phi} = (I - S_{\Theta}) (1 + S_{\Theta})^{-1}.$
\end{Proposition}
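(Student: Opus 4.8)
The plan is to verify the three claims about $\phi(z) = (1-z)/(1+z+\Theta(z))$ in order: first boundedness, then outerness, then the operator identity $A_{\phi} = (I - S_{\Theta})(I + S_{\Theta})^{-1}$. The key structural observation driving everything is that $\Theta(z) = \exp\!\big(\tfrac{z+1}{z-1}\big)$ maps $\D$ into $\D$, so $|\Theta(z)| < 1$ on $\D$, and that on the unit circle $\T$ we have $|\Theta| = 1$ a.e. For boundedness I would argue that the denominator $1 + z + \Theta(z)$ is bounded away from $0$ on $\D$: since $|z| \le 1$ and $|\Theta(z)| < 1$, the only way $1 + z + \Theta(z)$ could approach $0$ is near $z = -1$ where simultaneously $\Theta(z) \to 0$; but $\Theta$ has a strong singularity precisely at $z = 1$, not at $z = -1$, so $\Theta(z) \to \Theta(-1) = \exp(0) = e^{0}$... more carefully, $\tfrac{z+1}{z-1}$ at $z = -1$ equals $0$, so $\Theta(-1) = 1$, giving $1 + z + \Theta(z) \to 1 + (-1) + 1 = 1 \ne 0$. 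A compactness/continuity argument on $\overline{\D}$, handling the isolated bad point $z = 1$ separately (where $\Theta$ has nontangential limits of modulus $1$ and the numerator $1-z$ vanishes, so $\phi$ even extends continuously to $1$ with value $0$), then shows $\phi \in H^{\infty}$.

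For outerness, I would show $\phi$ has no zeros in $\D$ and that $\log|\phi|$ satisfies the mean-value equality defining outer functions — equivalently, that $1/\phi$ (or a suitable bounded multiple) is also in $H^2$, i.e. $\phi$ is ``invertible in the Smirnov sense.'' The zeros of $\phi$ in $\D$ are the zeros of $1 - z$, namely none in the open disc. It then suffices to write $\phi = (1-z) \cdot \dfrac{1}{1+z+\Theta}$ and note $(1-z)$ is outer, while $1+z+\Theta$ is bounded, bounded below on $\D$, hence outer (a bounded analytic function bounded away from zero is outer), and the reciprocal of an outer function is outer (in the Smirnov class); a product of outer functions is outer. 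Care is needed that $1+z+\Theta$ being bounded below on $\D$ is exactly what the first step established.

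The operator identity is the substantive part and I expect it to be the main obstacle. The strategy is to use the $H^{\infty}$ functional calculus for $S_{\Theta}$ together with Proposition~\ref{9Pms 67tyiuwerthbvssS}. One wants to show $A_{\phi}(I + S_{\Theta}) = I - S_{\Theta}$, i.e. $A_{\phi} A_{1+z} = A_{1-z}$ as operators on $\K_{\Theta}$, and then invoke the invertibility of $I + S_{\Theta}$ (valid since $\sigma(S_{\Theta}) = \{1\}$, so $-1 \notin \sigma(S_{\Theta})$). Since $\phi \in H^{\infty}$, part~(iii) gives $A_{\phi} = \phi(S_{\Theta})$, and the functional calculus is multiplicative on $H^{\infty}$, so $\phi(S_{\Theta})(I + S_{\Theta}) = \phi(S_{\Theta})(1+z)(S_{\Theta}) = \big(\phi(z)(1+z)\big)(S_{\Theta})$. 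Thus it remains to check that $\phi(z)(1+z)$ and $1 - z$ induce the same operator on $\K_{\Theta}$, i.e. by part~(ii) that $\phi(z)(1+z) - (1-z) \in \Theta H^2 + \overline{\Theta H^2}$. Computing, $\phi(z)(1+z) - (1-z) = (1-z)\cdot\dfrac{1+z}{1+z+\Theta} - (1-z) = (1-z)\cdot\dfrac{-\Theta}{1+z+\Theta} = \Theta \cdot \dfrac{-(1-z)}{1+z+\Theta}$, and the function $\dfrac{-(1-z)}{1+z+\Theta}$ is exactly $-\phi \in H^{\infty} \subset H^2$, so the difference lies in $\Theta H^2$. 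Hence $A_{\phi}(I + S_{\Theta}) = A_{1-z} = I - S_{\Theta}$, and multiplying on the right by $(I+S_{\Theta})^{-1}$ finishes the proof. The only delicate point is justifying the multiplicativity of the $H^{\infty}$ calculus here and that $\phi(1+z) \in H^{\infty}$ so that part~(iii) applies to it — both are standard once boundedness of $\phi$ is in hand.
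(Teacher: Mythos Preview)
Your treatment of the operator identity is correct and close in spirit to the paper's, though the paper takes a marginally slicker route: instead of multiplying $\phi$ by $1+z$ and checking that $\phi(z)(1+z)-(1-z)=-\Theta(z)\phi(z)\in\Theta H^{2}$, the paper multiplies by $\psi(z)=1+z+\Theta(z)$, for which $\phi\psi=1-z$ holds \emph{exactly} as functions. Then $A_{\psi}=A_{1+z}+A_{\Theta}=(I+S_{\Theta})+0$ by Proposition~\ref{9Pms 67tyiuwerthbvssS}, so $A_{\phi}(I+S_{\Theta})=A_{\phi\psi}=A_{1-z}=I-S_{\Theta}$ with no correction term to track. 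Your variant is equally valid.

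The genuine gap is in your first step. The assertion that ``the only way $1+z+\Theta(z)$ could approach $0$ is near $z=-1$'' does not follow from $|z|\le 1$ and $|\Theta(z)|<1$: those bounds allow $z+\Theta(z)=-1$ in many ways, for instance with $z$ near $e^{\pm 2\pi i/3}$ on $\T$, where $|1+z|=1=|\Theta|$. Your compactness/continuity sketch on $\overline{\D}\setminus\{1\}$ presupposes non-vanishing of $1+z+\Theta$ there, which is exactly the point at issue and which you have not established. The paper supplies this missing content by proving the stronger fact $\Re\big(1+z+\Theta(z)\big)\ge m>0$ throughout $\D$: one computes
\[
\Re f(e^{i\theta})=1+\cos\theta+\cos\big(\cot(\theta/2)\big)
\]
on $\T\setminus\{1\}$, splits $(0,\pi]$ at a carefully chosen $\theta'\in(\theta_{0},\pi/2)$ so that on each subinterval one of the nonnegative groupings $(1+\cos\theta)$ or $\big(1+\cos(\cot(\theta/2))\big)$ dominates the remaining cosine term, and then transfers the resulting uniform boundary bound to $\D$ via the Poisson integral. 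This positive-real-part estimate simultaneously yields that $1+z+\Theta$ is bounded away from zero (hence $\phi\in H^{\infty}$) and that it is outer; outerness of $\phi=(1-z)/(1+z+\Theta)$ then follows as you outlined.
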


\begin{proof}
We first argue that $f(z) = 1 + z + \Theta(z)$ is bounded away from zero on $\D$ (see Figure \ref{graph2})
and thus is an invertible element of $H^{\infty}$. Thus $\phi \in H^{\infty}$. 
\begin{figure}
 \includegraphics[width=.5\textwidth]{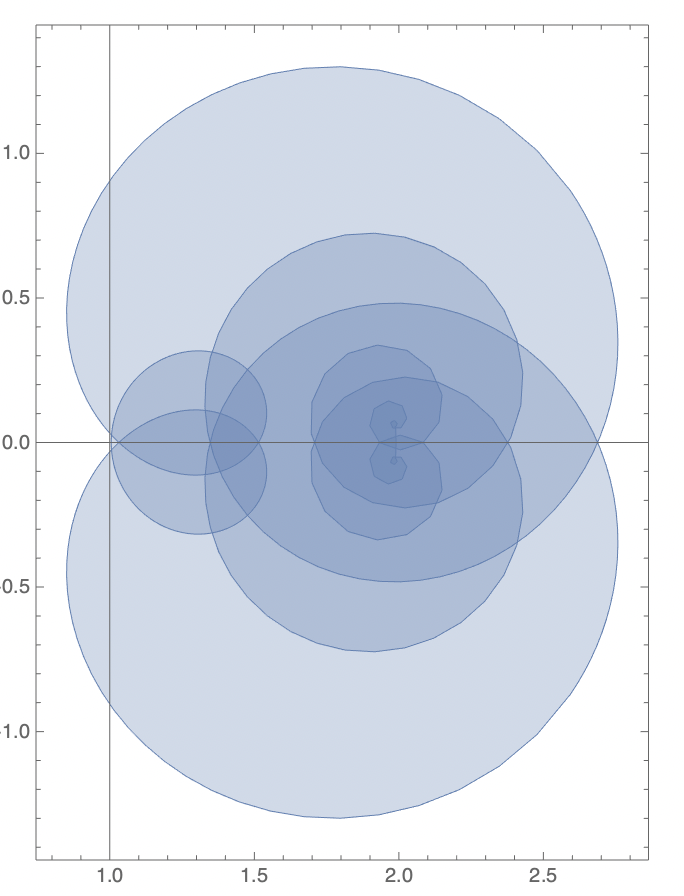}
 \caption{The image of $1 + z + \Theta(z)$ for $z \in \D$. Notice how the closure of this image does not intersect the origin.} 
 \label{graph2}
\end{figure}
Notice that 
$$
\Re f(e^{i\theta}) = 1+\cos\theta + \cos( \cot \theta/2 ).
$$
The function $\cot \theta/2$ is strictly decreasing on $(0,\pi)$ as it moves from $+\infty$ to zero, and at $\theta=\pi/2$ its value is $1$. Hence there is a unique $\theta_0 \in (0,\pi/2)$ such that
$
\cot \theta_0/2 = \pi/2.
$
Fix any $\theta' \in (\theta_0,\pi/2)$ and consider the partition $(0,\pi] = (0,\theta') \cup [\theta',\pi]$. On $(0,\theta')$,
\[
\Re f(e^{i\theta}) = \cos\theta + \bigg(1+\cos( \cot \theta/2 )\bigg) \geq \cos\theta',
\]
and, on $[\theta',\pi]$,
\[
\Re f(e^{i\theta}) = \bigg(1+\cos\theta \bigg) + \cos( \cot \theta/2) \geq \cos( \cot \theta'/2).
\]
Therefore,
$
\Re f(e^{i\theta}) \geq m$ on $\T \setminus \{1\}$, 
where
\[
m = \min\{\cos\theta' , \, \cos( \cot \theta'/2) \} > 0.
\]
By the Poisson integral formula, we conclude that
\[
\Re f(z) = \int_{0}^{2 \pi} \Re f(e^{i \theta})\frac{1 - |z|^2}{|z - e^{i \theta}|^2}\frac{d \theta}{2 \pi} \geq m, \quad z \in \mathbb{D}.
\]
A well known fact says that if $\Re f > 0$ then $f$ is an outer function and thus has no zeros in $\D$ \cite[p.~65]{Garnett}. 

If $\psi(z) = 1 + z + \Theta(z)$, notice that $\phi(z) \psi(z) = 1 - z$ and hence the functional calculus yields $A_{\phi} A_{\psi} = A_{1 - z}$. Proposition \ref{9Pms 67tyiuwerthbvssS} implies that 
$$A_{\psi} = A_{1 + z + \Theta} = A_{1 + z}  + A_{\Theta} = I + S_{\Theta} + 0.$$ Since $A_{1 - z} = I - S_{\Theta}$, it follows that 
$A_{\phi} = (I - S_{\Theta})(I + S_{\Theta})^{-1}$.
\end{proof}

\begin{Corollary}
$V  = J^{*} A_{\phi} J$.
\end{Corollary}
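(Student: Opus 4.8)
The plan is to chase the relation $J(I-V)(I+V)^{-1}J^* = S_\Theta$ from \eqref{6ttWoLk} backwards to recover $V$ itself, using that $\phi(z)=\tfrac{1-z}{1+z}$ is the Cayley-type transform inverting $w\mapsto \tfrac{1-w}{1+w}$. First I would note that $\phi$ in Proposition \ref{poiurgty887} is chosen precisely so that $A_\phi = (I-S_\Theta)(I+S_\Theta)^{-1}$, so by \eqref{6ttWoLk} we have $A_\phi = J(I-V)(I+V)^{-1}J^*$. The map $t \mapsto \tfrac{1-t}{1+t}$ is an involution on a neighborhood of $\{0\}$ (and is its own functional inverse), so applying it to both sides and invoking the functional calculus for $V$ (valid since $\sigma(V)=\{0\}$, so $I+V$ is invertible and $(I-V)(I+V)^{-1}$ has spectrum $\{1\}$, keeping us away from the singularity of the inverse transform) should give $V = J^*\,g\big(A_\phi\big)\,J$ where $g(w) = \tfrac{1-w}{1+w}$.

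The cleanest route is to solve the algebraic identity directly rather than invoke an abstract functional-calculus inversion. Set $N := (I-V)(I+V)^{-1}$, so that $JNJ^* = S_\Theta$. Solving for $V$: from $N(I+V) = I-V$ we get $V(I+N) = I-N$, hence $V = (I-N)(I+N)^{-1}$, provided $I+N$ is invertible — which holds because $\sigma(N) = \{1\}$, so $\sigma(I+N) = \{2\}$. Then I would apply the unitary $J$ and its inverse: since $J N J^* = S_\Theta$, conjugating the rational expression $(I-N)(I+N)^{-1}$ by $J$ gives $J V J^* = (I - S_\Theta)(I+S_\Theta)^{-1}$ (here one uses that $J$ is unitary so it intertwines sums, products, and inverses of bounded operators). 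By Proposition \ref{poiurgty887}, the right-hand side equals $A_\phi$, so $JVJ^* = A_\phi$, i.e. $V = J^* A_\phi J$.

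The main point requiring care is the invertibility bookkeeping: one must confirm that $I+V$, $I+N$, and $I+S_\Theta$ are all genuinely invertible bounded operators, which follows in each case from the spectral mapping theorem applied to the nilpotent-like spectra $\sigma(V)=\{0\}$ and $\sigma(N)=\sigma(S_\Theta)=\{1\}$ — all facts already recorded in the text preceding the statement. The only genuinely mechanical step is verifying that $w \mapsto (1-w)(1+w)^{-1}$ composed with itself is the identity at the operator level, which is a two-line manipulation: if $N = (I-V)(I+V)^{-1}$ then $(I-N)(I+N)^{-1} = V$ by clearing denominators, using that $V$ commutes with $(I+V)^{-1}$. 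I do not anticipate a real obstacle here; the corollary is essentially a formal consequence of \eqref{6ttWoLk}, Proposition \ref{poiurgty887}, and the unitarity of $J$, and the proof is short.
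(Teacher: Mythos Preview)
Your argument is correct and is precisely the short formal deduction the paper intends: the corollary is stated without proof because it follows immediately by inverting the Cayley transform in \eqref{6ttWoLk} and invoking Proposition~\ref{poiurgty887}, exactly as you do. The invertibility checks you flag ($I+V$, $I+N$, $I+S_\Theta$) are the only points of care, and they are handled by the spectral facts already noted in the text.
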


Now let $A \in \mathcal{B}(L^{2}[0, 1])$ such that 
$A^2 = V$.
Lemma \ref{98yuijokl} yields  $A \in \{V\}'$. Since 
$$(I - V)(I + V)^{-1} = I + 2 \sum_{n = 1}^{\infty} (-1)^{n} V^n,$$ then 
$A \in \{(I - V)(I + V)^{-1}\}'$. Note that the series above converges in norm since $V$ is quasinilpotent and thus $\|V^{n}\|^{1/n} \to 0$.  From \eqref{6ttWoLk} we see that $J A J^{*} \in \{S_{\Theta}\}'$. 
Thus $J A J^{*} = A_{\psi}$ for some $\psi \in H^{\infty}$ (Proposition \ref{9Pms 67tyiuwerthbvssS}). Since 
$$A_{\psi}^{2} = (J A J^{*})^2 = J A^2 J^{*} = J V J^{*} = A_{\phi},$$
Proposition \ref{9Pms 67tyiuwerthbvssS} also implies that 
$\psi^2 - \phi \in \Theta H^{2} + \overline{\Theta H^{2}}.$
Since $\psi^2 - \phi$ belongs to $H^{\infty}$ and must also belong to $\Theta H^{\infty} + \overline{\Theta H^{\infty}}$, it follows from  $\overline{\Theta H^{\infty}} \cap H^{\infty} = \C$ that $\psi^2 - \phi \in \Theta H^2$. This will imply that 
$\psi^2 = \phi + \Theta h$ for some $h \in H^{\infty}$.

Recall that $\phi$ is an outer function (and hence is zero free in $\D$) and so there are indeed $\psi \in H^{\infty}$ with $\psi^2 = \phi$. This says that 
$A = J^{*} A_{\psi} J$ for some $\psi \in H^{\infty}$ with $\psi^2 = \phi + \Theta h$ for some $h \in H^{\infty}$. 

On the other hand, if $\psi \in H^{\infty}$  and $h \in H^{\infty}$ with $\psi^2 = \phi + \Theta h$, then the operator $J^{*} A_{\psi} J$ on $L^{2}[0, 1]$ satisfies 
$$
(J^{*} A_{\psi} J)^2  = J^{*} A_{\psi}^{2} J
 = J^{*} (A_{\psi} + A_{\Theta} A_{h}) J
 = J^{*} (A_{\phi} + 0) J
 =  J^{*} A_{\phi} J
 = V.
$$
Note the use of the $H^{\infty}$ functional calculus for the compressed shift $S_{\Theta}$ as well as the fact that $A_{\Theta} = 0$ (Proposition \ref{9Pms 67tyiuwerthbvssS}). This argument is summarized with the following theorem. 

\begin{Theorem}\label{sdfjsdf122445}
For $A \in \mathcal{B}(L^{2}[0, 1])$ the following are equivalent. 
\begin{enumerate}
\item[(i)] $A^2 = V$.
\item[(ii)] $A = J^{*} A_{\psi} J$ for some $\psi \in H^{\infty}$ such that $\psi^2 = \phi + \Theta h$ for some $h \in H^{\infty}$.  
\end{enumerate}
\end{Theorem}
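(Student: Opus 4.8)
The plan is to reduce the statement to a computation with truncated Toeplitz operators on the model space $\K_\Theta$, using the unitary $J$ from Sarason to transport everything from $L^2[0,1]$ to $\K_\Theta$. The key structural input is that a square root must commute with the operator it squares to; applying Lemma \ref{98yuijokl} to $A^2 = V$ gives $A \in \{V\}'$, and since $(I-V)(I+V)^{-1}$ is a norm-convergent power series in $V$ (using quasinilpotence of $V$), we get $A \in \{(I-V)(I+V)^{-1}\}'$ as well. Conjugating by $J$ and invoking \eqref{6ttWoLk} then places $JAJ^{*}$ in the commutant of the compressed shift $S_\Theta$, so by Proposition \ref{9Pms 67tyiuwerthbvssS}(iv) we have $JAJ^{*} = A_\psi$ for some $\psi \in H^\infty$. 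This is the crucial step: it forces every square root of $V$ to be of the form $J^{*}A_\psi J$, i.e.\ to lie in the commutant of $V$, which a priori is far from obvious for an abstract operator equation.

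Next I would impose the square-root condition. Squaring $A = J^{*}A_\psi J$ and using $JVJ^{*} = A_\phi$ (the preceding Corollary) together with the multiplicativity of the $H^\infty$ functional calculus, $A_\psi^2 = A_{\psi^2}$, yields $A_{\psi^2} = A_\phi$, i.e.\ $A_{\psi^2 - \phi} = 0$. By Proposition \ref{9Pms 67tyiuwerthbvssS}(ii) this means $\psi^2 - \phi \in \Theta H^2 + \overline{\Theta H^2}$; since $\psi^2 - \phi \in H^\infty$ and (by a small bounded-symbol sharpening) actually in $\Theta H^\infty + \overline{\Theta H^\infty}$, the fact that $\overline{\Theta H^\infty} \cap H^\infty = \C$ collapses the anti-analytic part to a constant, giving $\psi^2 - \phi \in \Theta H^2$, hence $\psi^2 = \phi + \Theta h$ for some $h \in H^\infty$. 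That establishes (i) $\Rightarrow$ (ii).

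For the converse, (ii) $\Rightarrow$ (i), I would simply run the computation forward: if $\psi^2 = \phi + \Theta h$ with $h \in H^\infty$, then
\[
(J^{*}A_\psi J)^2 = J^{*}A_{\psi^2} J = J^{*}A_{\phi + \Theta h} J = J^{*}(A_\phi + A_\Theta A_h) J = J^{*}A_\phi J = V,
\]
using $A_\Theta = 0$ from Proposition \ref{9Pms 67tyiuwerthbvssS}(ii) and the Corollary identifying $V$ with $J^{*}A_\phi J$. Finally I should note, as the excerpt does, that solutions $\psi$ genuinely exist: since $\phi$ is outer it is zero-free on $\D$, so $\sqrt{\phi}$ is a well-defined bounded analytic function, recovering $\pm Y$ (up to unitary equivalence) when $h \equiv 0$ and thereby showing the description in (ii) is nonempty.

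I expect the main obstacle to be the passage from "$\psi^2 - \phi$ annihilates $A_{\cdot}$" to the clean form "$\psi^2 = \phi + \Theta h$" with $h \in H^\infty$ rather than merely $h \in H^2$: one has to argue that the membership $\psi^2 - \phi \in \Theta H^2 + \overline{\Theta H^2}$ can be upgraded to $\Theta H^\infty + \overline{\Theta H^\infty}$ (because $\psi^2 - \phi$ is itself bounded and one can split off the analytic and co-analytic pieces boundedly), then use $\overline{\Theta H^\infty} \cap H^\infty = \C$ to discard the co-analytic summand. Everything else — the commutant reductions, the functional calculus manipulations, the converse check — is routine given the propositions already proved in this section.
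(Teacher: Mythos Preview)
Your proposal is correct and follows essentially the same argument as the paper: reduce via Lemma \ref{98yuijokl} and the quasinilpotence of $V$ to the commutant of $S_\Theta$, identify $JAJ^{*}=A_\psi$ by Proposition \ref{9Pms 67tyiuwerthbvssS}(iv), use $A_{\psi^2-\phi}=0$ together with $\overline{\Theta H^\infty}\cap H^\infty=\C$ to obtain $\psi^2=\phi+\Theta h$, and verify the converse by the functional-calculus computation with $A_\Theta=0$. The technical point you flag about upgrading to $h\in H^\infty$ is exactly the one the paper addresses in the same way.
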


To show there are only two square roots of $V$, we follow a variation of an argument of Sarason \cite{MR208383}. Notice that one square root of $V$ is $J^{*} A_{\sqrt{\phi}} J$. Let us show that the other is $J^{*} A_{-\sqrt{\phi}} J$. If $B$ is another square root of $V$, then $B = J^{*} A_{\psi} J$ where $\psi^2 = \phi + \Theta h$. In other words, $\psi^2 - \phi = \Theta h$. 
Write 
$$\Theta h  = \psi^2 - \phi = (\psi + \sqrt{\phi}) (\psi - \sqrt{\phi})$$
and observe that for some $\gamma_j \geq 0$, the inner functions 
$$q_1(z) = \exp\Big(-\gamma_1 \frac{1 - z}{1 + z}\Big) \quad \mbox{and} \quad q_2(z) = \exp\Big(-\gamma_2 \frac{1 - z}{1 + z}\Big)$$ divide $\psi - \sqrt{\phi}$ and $\psi + \sqrt{\phi}$ respectively. Moreover, choose the largest $\gamma_1, \gamma_2$ such that $q_1$ and $q_2$ are inner divisors of $\psi - \sqrt{\phi}$ and $\psi + \sqrt{\phi}$.
Write 
$$\psi + \sqrt{\phi} = q_1 h_1 \quad \mbox{and} \quad \psi - \sqrt{\phi} = q_2 h_2, \quad h_1, h_1 \in H^{\infty}.$$ It follows that $\sqrt{\phi} = \frac{1}{2} (q_1 h_1 - q_2 h_2).$ Since $\sqrt{\phi}$ is outer, it must be the case that one of $\gamma_1$ or $\gamma_2$ must be zero. If $\gamma_1 > 0$ and $\gamma_2 = 0$, then $\gamma_1 \geq 1$ and it follows that $\psi + \sqrt{\phi}$ is divisible by $\Theta$. An application of Proposition \ref{9Pms 67tyiuwerthbvssS} yields $A_{\psi} = A_{-\sqrt{\phi}}.$

\section{Square root of a compressed shift}

The previous section leads us to a discussion about the square roots of a compressed shift. For {\em any} inner function $u$, there is the compressed shift $S_u = P_{u} S|_{\K_u}$. The proof of Theorem \ref{sdfjsdf122445} implies the following theorem.

\begin{Theorem}\label{bBbsbabbBBB}
For the atomic inner function $\Theta$ and $A \in \mathcal{B}(\K_{\Theta})$, the following are equivalent. 
\begin{enumerate}
\item[(i)] $A^2 = S_{\Theta}$. 
\item[(ii)] $A = A_{\psi}$ for some  $\psi \in H^{\infty}, \psi^2  =  z + \Theta h, h \in H^{\infty}$.
\end{enumerate}
Furthermore, $S_{\Theta}$ has exactly two square roots. 
\end{Theorem}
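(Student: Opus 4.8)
\textbf{Proof proposal for Theorem \ref{bBbsbabbBBB}.}

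The plan is to mirror the Volterra analysis almost verbatim, since $S_\Theta$ is unitarily equivalent (via $J$) to $A_\phi$, and the equivalence of (i) and (ii) is then exactly the content of Theorem \ref{sdfjsdf122445} after one cancels the unitary $J$. Concretely, if $A^2 = S_\Theta$ then Lemma \ref{98yuijokl} gives $A \in \{S_\Theta\}'$, so by Proposition \ref{9Pms 67tyiuwerthbvssS}(iv) we have $A = A_\psi$ for some $\psi \in H^\infty$. Squaring and using the $H^\infty$ functional calculus, $A_{\psi^2} = A_\psi^2 = S_\Theta = A_z$, so $A_{\psi^2 - z} = 0$, and Proposition \ref{9Pms 67tyiuwerthbvssS}(ii) gives $\psi^2 - z \in \Theta H^2 + \overline{\Theta H^2}$. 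Exactly as in the Volterra argument, since $\psi^2 - z \in H^\infty$ and $\overline{\Theta H^\infty} \cap H^\infty = \C$, we deduce $\psi^2 - z \in \Theta H^\infty$, i.e.\ $\psi^2 = z + \Theta h$ for some $h \in H^\infty$. The converse direction is the same computation read backwards: if $\psi^2 = z + \Theta h$, then $A_\psi^2 = A_{\psi^2} = A_{z + \Theta h} = A_z + A_\Theta A_h = S_\Theta + 0 = S_\Theta$, using $A_\Theta = 0$ from Proposition \ref{9Pms 67tyiuwerthbvssS}(ii) and the multiplicativity of the functional calculus.

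For the ``exactly two square roots'' assertion, I would reuse the factorization argument appearing just after Theorem \ref{sdfjsdf122445}. First note that $z$ is outer and zero-free in $\D$, so it genuinely has an analytic square root $\sqrt{z}$ (taking $h = 0$), hence $A_{\sqrt z}$ and $A_{-\sqrt z}$ are two square roots of $S_\Theta$. Now suppose $A = A_\psi$ is any square root, so $\psi^2 - z = \Theta h$, and write $\Theta h = \psi^2 - z = (\psi + \sqrt z)(\psi - \sqrt z)$. Since $\Theta(z) = \exp((z+1)/(z-1))$ is the atomic inner function with the single point mass at $1$, any inner divisor of $\Theta$ is $\exp(\gamma(z+1)/(z-1))$ for some $0 \le \gamma \le 1$, and $\gamma \ge 1$ forces divisibility by the whole of $\Theta$. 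Let $\gamma_1, \gamma_2 \in [0,\infty)$ be the largest exponents for which $q_j(z) = \exp(-\gamma_j(1-z)/(1+z))$ divides $\psi + \sqrt z$ and $\psi - \sqrt z$ respectively; then $\gamma_1 + \gamma_2 \ge 1$ because the product $(\psi+\sqrt z)(\psi - \sqrt z)$ is divisible by $\Theta$. Writing $\psi \pm \sqrt z = q_j h_j$ gives $\sqrt z = \tfrac12(q_1 h_1 - q_2 h_2)$; since $\sqrt z$ is outer it has no nonconstant inner divisor, which forces $\min(\gamma_1,\gamma_2) = 0$. Hence one of $\gamma_1, \gamma_2$ is $\ge 1$, so either $\psi + \sqrt z$ or $\psi - \sqrt z$ is divisible by $\Theta$; that is, $\psi \equiv \mp\sqrt z \pmod{\Theta H^\infty}$, and Proposition \ref{9Pms 67tyiuwerthbvssS}(ii) gives $A_\psi = A_{\pm\sqrt z}$.

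The one place needing a little care is the claim that $\gamma_1 + \gamma_2 \ge 1$: divisibility of a product of two $H^\infty$ functions by an inner function $\Theta$ does not in general split as divisibility of the factors, but for the atomic inner function $\Theta$ the relevant invariant is the mass at the point $1$ of the singular measure of the inner part, and this mass is additive over products (the inner part of a product is the product of inner parts). So if $\Theta \mid (\psi+\sqrt z)(\psi - \sqrt z)$ then the point mass at $1$ of the inner part of the product is $\ge 1$, and this point mass is the sum of the corresponding point masses of the two factors — which are precisely $\gamma_1$ and $\gamma_2$ by maximality. I expect this additivity-of-point-mass bookkeeping to be the only genuine obstacle; everything else is a transcription of the Volterra case with $\phi$ replaced by $z$. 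I would remark explicitly that the argument works because $z$ is outer (so it has an $H^\infty$ square root at all) and because $\Theta$'s singular measure is a single atom (so the divisibility bookkeeping is one-dimensional), and close with the observation that the two roots $A_{\pm\sqrt z}$ correspond under $J$ to the two roots $J^*A_{\pm\sqrt\phi}J$ of the Volterra operator only after the further unitary implementing $A_\phi \cong S_\Theta$ — i.e.\ $S_\Theta$ itself, not $V$, is the operator whose roots are being counted here.
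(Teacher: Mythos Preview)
Your argument for the equivalence (i)$\iff$(ii) is correct and is exactly what the paper intends when it says ``the proof of Theorem \ref{sdfjsdf122445} implies'' the present result.

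The ``exactly two'' part, however, has a genuine gap. You assert that ``$z$ is outer and zero-free in $\D$, so it genuinely has an analytic square root $\sqrt z$.'' This is false on both counts: $z$ vanishes at the origin, and $z$ is an \emph{inner} function, not an outer one. Consequently there is no $\psi\in H^\infty$ with $\psi^2=z$, the choice $h=0$ does not produce a square root, and the factorization $\psi^2-z=(\psi+\sqrt z)(\psi-\sqrt z)$ on which your uniqueness argument rests is not a factorization in $H^\infty$ at all. (Your later appeal to ``$\sqrt z$ is outer'' then compounds the error.)

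The paper's proof addresses precisely this obstruction: one must first find some $h\in H^\infty$ for which $f(z)=z+\Theta(z)h(z)$ is zero-free on $\D$. The paper takes $h(z)=(1-z)^{1/5}$ and shows, by a Poisson-integral estimate on $\Re f(e^{i\theta})$, that $\Re f>0$ on $\D$; hence $f$ is outer and $\sqrt f\in H^\infty$. (Note $f(0)=\Theta(0)=e^{-1}\ne 0$, so the zero of $z$ at the origin is killed by the perturbation.) Only then does the Sarason-style divisibility argument go through, with $\sqrt f$ playing the role you tried to assign to $\sqrt z$: one writes $\psi^2-f\in\Theta H^\infty$, factors $(\psi+\sqrt f)(\psi-\sqrt f)$, and uses that $\sqrt f$ is outer to force one of the atomic exponents to vanish. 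Your bookkeeping about additivity of the point mass at $1$ is fine; the missing idea is the construction of a zero-free representative $f$ of the coset $z+\Theta H^\infty$.
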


\begin{proof}
First let us  prove that the set of square roots of $S_u$ is nonempty. 
For this it is enough to  check that 
$z + \Theta(z) (1 - z)^{1/5}$ has no zeros in $\D$ (see Figure \ref{graph1}) and thus has an analytic square root. 
\begin{figure}
 \includegraphics[width=.5\textwidth]{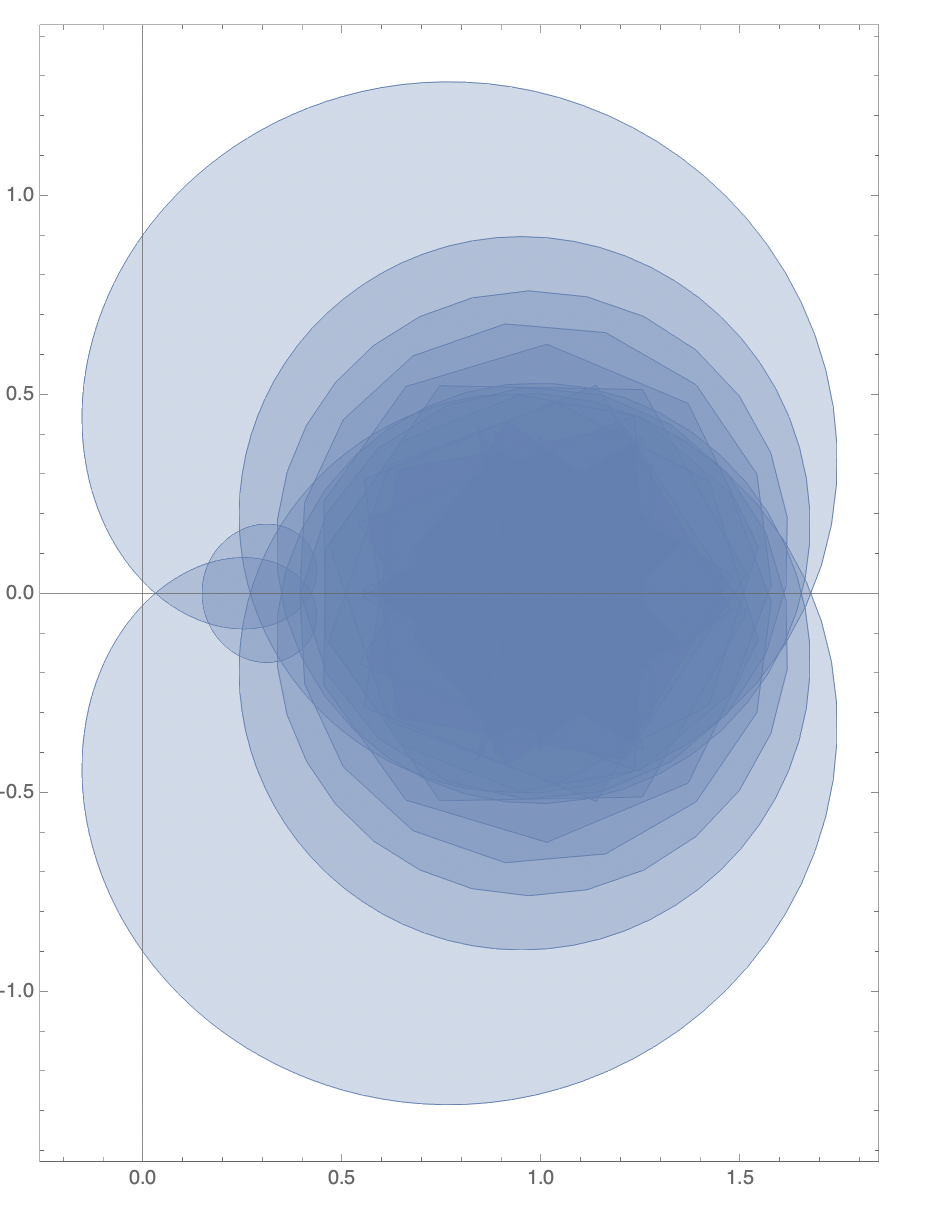}
 \caption{The image of $z + \Theta(z) (1 - z)^{1/5}$ for $z \in \D$. Notice how the closure of this image does not intersect the origin.} 
 \label{graph1}
\end{figure}
The reasoning is similar to the argument in Proposition \ref{poiurgty887}, albeit a bit more complex. In this case
\[
f(z) = z+\Theta(z)(1-z)^{1/5}
\]
and thus
\[
\Re f(e^{i\theta}) = \cos\theta + (2\sin\theta/2)^{1/5} \cos\left(\tfrac{\theta-\pi}{10}-\cot(\theta/2)\right), \quad 0<\theta<\pi.
\]
There is a similar formula for $-\pi<\theta<0$. Then it is enough to observe that
\[
m = \inf_{0<|\theta|\leq\pi} \Re f(e^{i\theta}) > 0.
\]
By the Poisson integral formula, we conclude that 
$\Re f(z) \geq m$ for all $z \in \mathbb{D}$. Thus $f$ is outer and hence has no zeros in $\D$. 

Next we observe that $A_{\sqrt{f}}$ is a square root of $S_{\Theta}$. Now follow the argument used to prove there are only two square roots of the Volterra operator (following Theorem \ref{sdfjsdf122445}) to prove that the other square root of $S_{\Theta}$ is $A_{-\sqrt{f}}$.
\end{proof}

Not every compressed shift has a square root. 

\begin{Proposition}
Suppose $u$ is inner and $u$ has a zero at $z = 0$ of order at least two. Then $S_u$ does not have a square root. 
\end{Proposition}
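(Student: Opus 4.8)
The plan is to use Lemma~\ref{98yuijokl} to place $A$ in the commutant of $S_u$, identify $A$ as a truncated Toeplitz operator with analytic symbol, and then extract an elementary parity contradiction from the fact that $u$ vanishes to order at least two at the origin. Suppose, for a contradiction, that $A\in\mathcal B(\K_u)$ satisfies $A^2=S_u$. By Lemma~\ref{98yuijokl}, $A\in\{S_u\}'$, and the commutant theorem for the compressed shift---the analogue of Proposition~\ref{9Pms 67tyiuwerthbvssS}(iv) valid for an arbitrary inner function $u$ (Sarason \cite{MR208383})---produces $\phi\in H^\infty$ with $A=A_\phi=\phi(S_u)$, where $A_\psi$ now denotes the truncated Toeplitz operator on $\K_u$ with symbol $\psi$. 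Since $\psi\mapsto\psi(S_u)$ is an algebra homomorphism on $H^\infty$ and $S_u=A_z$, we obtain $A_{\phi^2}=\phi(S_u)^2=S_u=A_z$, hence $A_{\phi^2-z}=0$. By the general form of the vanishing criterion (the analogue of Proposition~\ref{9Pms 67tyiuwerthbvssS}(ii) for arbitrary inner $u$), this forces
\[
\phi^2-z=u g_1+\overline{u g_2},\qquad g_1,g_2\in H^2 .
\]

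Next I would apply the Riesz projection $P_+$ of $L^2(\T)$ onto $H^2$ to both sides. The left-hand side lies in $H^2$ and is unchanged by $P_+$; the summand $ug_1$ lies in $H^2$; and, writing $ug_2=\sum_{n\ge0}c_nz^n\in H^2$, we have $\overline{ug_2}=\sum_{n\ge0}\overline{c_n}\,\overline{z}^{\,n}$, so $P_+(\overline{ug_2})=\overline{c_0}=\overline{(ug_2)(0)}=\overline{u(0)g_2(0)}=0$, using $u(0)=0$. Therefore $\phi^2=z+ug_1$ with $g_1\in H^2$. Since $u$ has a zero of order at least two at the origin, $ug_1=O(z^2)$ near $0$, so the right-hand side has constant Taylor coefficient $0$ and coefficient of $z^1$ equal to $1$. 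Writing $\phi=\sum_{n\ge0}a_nz^n$ and comparing Taylor coefficients in $\phi^2=z+ug_1$: the constant term gives $a_0^2=0$, hence $a_0=0$; the coefficient of $z^1$ then gives $2a_0a_1=1$, i.e. $0=1$. This contradiction shows that $S_u$ has no square root.

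The only steps requiring genuine care are the two invocations of the inner-function-general forms of the commutant theorem and of the identity $A_\psi=0\iff\psi\in uH^2+\overline{uH^2}$ (classical results of Sarason, holding verbatim for every inner $u$), together with the small bookkeeping in the $P_+$ step; once the relation $\phi^2=z+ug_1$ is established the parity argument is immediate, so I do not anticipate a substantive obstacle.
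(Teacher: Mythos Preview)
Your argument is correct and follows the same approach as the paper. The paper's proof simply cites the preceding discussion to assert that any square root of $S_u$ must be $A_\psi$ with $\psi^2=z+uh$ for some $h\in H^\infty$, and then observes that $z+uh$ has a zero of order exactly one at $0$ when $u$ vanishes to order at least two there, hence admits no analytic square root; you have unpacked that ``earlier discussion'' explicitly (commutant theorem, vanishing criterion, Riesz projection to kill the $\overline{uH^2}$ part) and arrived at the identical parity obstruction via Taylor coefficients.
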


\begin{proof}
Our earlier discussion shows that the set of square roots of $S_u$ are $\{A_{\psi} : \psi \in H^{\infty}, \psi^2  =  z + u h, h \in H^{\infty}\}$. If $u$ has  a zero of order at least two at $z = 0$, then 
$z + u h = z + z^2 k$ for some $k \in H^{\infty}$ and thus $z + z^2 k(z)$ has a zero of order one at $z = 0$. Thus, there is no $H^{\infty}$ function $\psi$ for which $\psi^2(z) = z + u h$.
\end{proof}

\section{Square roots of  $T_{\cos \theta}$.}

The Toeplitz operator with symbol $\cos \theta$, equivalently 
$$T_{\cos \theta} = \tfrac{1}{2}(S + S^{*}),$$
 is a self-adjoint operator. Therefore, by the spectral theorem for normal operators,  it has a square root. 

A result of Hilbert \cite{MR0056184} (see \cite[Ch.~3]{MR822228} for a modern presentation) shows that if $(u_n)_{n = 0}^{\infty}$ are the Chebyschev polynomials of the second kind \cite{MR0372517},  then the operator $F: L^2(\rho) \to H^2$, where $\rho = \sqrt{1 - x^2}$ on $[-1, 1]$, defined by
\begin{equation}\label{E:def-V}
F u_n = \sqrt{\frac{\pi}{2}} \ z^n, \qquad n \geq 0,
\end{equation}
is unitary and intertwines $M_x$ on $L^2(\rho)$ and $T_{\cos \theta}$. More explicitly,
\[
F M_x = T_{\cos \theta} F.
\]
Thus the matrix representation for $T_{\cos \theta}$ with respect to the orthonormal basis $(z^n)_{n = 0}^{\infty}$  for $H^2$ is $[a_{mn}]_{m, n = 0}^{\infty}$, where
\[
a_{mn} := \langle T_{\cos \theta} z^n, z^m\rangle_{H^2}, \qquad m, n \geq 0,
\]
which is the Toeplitz matrix
$$\begin{bmatrix}
0 & \tfrac{1}{2} & 0 & 0 & 0 & \cdots \\
\tfrac{1}{2} & 0 &  \tfrac{1}{2} & 0 & 0 & \cdots \\
0 &  \tfrac{1}{2} & 0 &  \tfrac{1}{2} & 0 & \cdots \\
0 & 0 &  \tfrac{1}{2} & 0 &  \tfrac{1}{2} & \cdots \\
0 & 0 & 0 &  \tfrac{1}{2} & 0 & \cdots \\[-3pt]
\vdots & \vdots & \vdots & \vdots & \vdots & \ddots
\end{bmatrix}.$$
By means of the unitary operator $F$, we can also write
\[
a_{mn} = \frac{2}{\pi} \int_{-1}^{1} x u_{n}(x) u_{m}(x) \rho(x) dx, \qquad m, n \geq 0.
\]
This observation gives us a path to describe {\em all} of the square roots of $T_{\cos \theta}$. Indeed, if $\phi$ is any measurable function on $[-1, 1]$ for which $\phi(x)^2 = x$ for all $x \in [-1, 1]$, then $M_{\phi}$ satisfies $M_{\phi}^2=M_x$. For example, one can take
\[
\phi(x) = \begin{cases}
\sqrt{x} & \mbox{if } x \geq 0,\\
i \sqrt{-x} & \mbox{if } x < 0.
\end{cases}
\]
Therefore, $F M_{\phi}F^*$ is a square root of $T_{\cos \theta}$.
Of course, there are many other $\phi$ for which $\phi^2 = x$.  The matrix representations of $M_{\phi}$ (with respect to the Chebyschev basis) and $F M_{\phi}F^*$ (with respect to the standard basis for $H^2$) are 
\[
\left[ \frac{2}{\pi}\int_{-1}^{1} \phi(x) u_{n}(x) u_{m}(x) \rho(x) dx\right]_{m, n = 0}^{\infty}.
\]
In fact, these are all the square roots of $A$.

\begin{Theorem}\label{Hilbertertertry}
For $B \in \mathcal{B}(H^2)$ the following are equivalent. 
\begin{enumerate}
\item[(i)] $B^2 = T_{\cos \theta}$. 
\item[(ii)] With respect to the orthonormal basis $(z^n)_{n = 0}^{\infty}$ of $H^2$, the matrix representation of $B$ is 
\begin{equation}\label{E:matrix-rep-sqrtA}
\left[ \frac{2}{\pi}\int_{-1}^{1} \phi(x) u_{n}(x) u_{m}(x) \rho(x) dx\right]_{m, n = 0}^{\infty},
\end{equation}
where $\phi$ is a measurable function on $[-1, 1]$ satisfying $\phi(x)^2 = x$.
\end{enumerate}
\end{Theorem}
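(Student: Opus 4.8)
The plan is to pass everything through the unitary $F$ of \eqref{E:def-V}, which conjugates $T_{\cos\theta}$ into the multiplication operator $M_x$ on $L^2(\rho)$, i.e.\ $T_{\cos\theta} = F M_x F^{*}$, and then to invoke the description of the commutant of $M_x$. Throughout, note that any measurable $\phi$ on $[-1,1]$ with $\phi^2 = x$ satisfies $|\phi| \le 1$ a.e., so $M_\phi$ is a contraction on $L^2(\rho)$; in particular such a $\phi$ automatically lies in $L^\infty[-1,1]$, which removes any ambiguity of boundedness in statement (ii).

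For the implication (ii) $\Rightarrow$ (i): since $F u_n = \sqrt{\pi/2}\, z^n$ we have $F^{*} z^n = \sqrt{2/\pi}\, u_n$, and for any $\phi$ as above, a direct computation using that the $u_n$ are real gives
\[
\langle F M_\phi F^{*} z^n, z^m \rangle_{H^2} = \langle M_\phi F^{*} z^n, F^{*} z^m \rangle_{L^2(\rho)} = \frac{2}{\pi} \int_{-1}^{1} \phi(x)\, u_n(x)\, u_m(x)\, \rho(x)\,dx .
\]
Hence the operator $B$ whose matrix with respect to $(z^n)_{n\ge 0}$ is \eqref{E:matrix-rep-sqrtA} is precisely $F M_\phi F^{*}$, and therefore $B^2 = F M_\phi^2 F^{*} = F M_x F^{*} = T_{\cos\theta}$.

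For (i) $\Rightarrow$ (ii): suppose $B^2 = T_{\cos\theta}$. By Lemma \ref{98yuijokl}, $B$ commutes with $T_{\cos\theta}$, so $C := F^{*} B F$ commutes with $M_x$ on $L^2(\rho)$. The structural fact I would use is that $\{M_x\}' = \{M_\psi : \psi \in L^\infty[-1,1]\}$ inside $\mathcal{B}(L^2(\rho))$. This holds because the constant function $1$ is a cyclic vector for $M_x$: the Chebyschev polynomials $(u_n)_{n\ge 0}$ form an orthonormal basis of $L^2(\rho)$ (this is exactly what makes $F$ unitary), so the polynomials are dense, and the standard argument applies --- if $C$ commutes with $M_x$ and $\psi := C1 \in L^2(\rho)$, then $C p = C M_p 1 = M_p C 1 = p\psi$ for every polynomial $p$, and the bound $\|p\psi\|_{L^2(\rho)} \le \|C\|\,\|p\|_{L^2(\rho)}$ together with density forces $\psi \in L^\infty[-1,1]$, whence $C = M_\psi$ on a dense subspace and so $C = M_\psi$. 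Applying this with our $C$ gives $B = F M_\psi F^{*}$ for some $\psi \in L^\infty[-1,1]$, and squaring yields $F M_{\psi^2} F^{*} = B^2 = T_{\cos\theta} = F M_x F^{*}$, so $M_{\psi^2} = M_x$, i.e.\ $\psi^2 = x$ a.e.\ on $[-1,1]$. Thus $\psi$ is a measurable function with $\psi(x)^2 = x$, and by the first part $B$ has the matrix representation \eqref{E:matrix-rep-sqrtA} with $\phi = \psi$.

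The only non-routine ingredient is the identification $\{M_x\}' = \{M_\psi\}$; the rest is bookkeeping with the unitary $F$. Within that step the delicate point is ruling out a larger commutant, and this is precisely where the cyclicity of $1$ --- equivalently, that $T_{\cos\theta}$ acts with uniform multiplicity one --- is essential: it is what makes the commutant abelian and equal to all multiplication operators, and hence what upgrades ``every square root commutes with $T_{\cos\theta}$'' to ``every square root is of the stated form.''
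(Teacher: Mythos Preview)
Your proof is correct and follows essentially the same approach as the paper: both transfer the problem via the unitary $F$ to $L^2(\rho)$, use Lemma~\ref{98yuijokl} to land in the commutant of $M_x$, identify that commutant with $\{M_\psi : \psi \in L^\infty\}$ via cyclicity of the constant function $1$ (density of polynomials), and then read off the matrix entries. Your write-up is slightly more explicit than the paper's --- you spell out the (ii)$\Rightarrow$(i) matrix computation and the $L^\infty$ bound for $\psi$ via $\|p\psi\|\le\|C\|\,\|p\|$ rather than the paper's pointwise-convergence variant --- but the substance is the same.
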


\begin{proof}
Let $B$ be any fixed square root of $T_{\cos \theta}$. Since $T_{\cos \theta}$ is unitarily equivalent to $M_x$ on $L^2(\rho)$ via the unitary operator $F$ defined by \eqref{E:def-V}, the operator $F^*BF$ is a square root of $M_x$. By Lemma \ref{98yuijokl},  $F^*BF$ commutes with $M_x$ and thus equal to $M_{\phi}$ for some $\phi \in L^{\infty}(\rho)$ (To see why this is true note, that if $A M_x = M_x A$, then $A p = p A(1)$ for any polynomial $p$. Letting $f \in L^2(\rho)$ and selecting polynomials $p_n \to f$ in norm and, passing to a subsequence, almost everywhere, we see that $A f = (A 1) f$. This shows that $A 1 \in L^{\infty}(\rho)$ and $A$ is the operator multiplication by $A 1$). 
This immediately implies
\[
M_{x} =  (F^*BF)^2 = M_{\phi^2}.
\]
By the uniqueness of the symbol of a multiplication operator, we must have $\phi(x)^2 = x$. The matrix representation of $F^*BF = M_{\phi}$ with respect to the orthonormal basis $(\sqrt{\frac{2}{\pi}} u_n)_{n=0}^{\infty}$ of $L^2(\rho)$ is the same as the matrix representation of $B$ with respect to the orthonormal basis $(z^n)_{n = 0}^{\infty}$ of $H^2$, and is given by \eqref{E:matrix-rep-sqrtA}.
\end{proof}

Notice that all of these square roots are complex symmetric operators, since with respect to the Chebyschev basis, the matrix representation \eqref{E:matrix-rep-sqrtA} is self transpose.

\section{Square roots of the Hilbert matrix}

The square root of the Hilbert matrix 
$$H = \begin{bmatrix}
1 & \frac{1}{2} & \frac{1}{3} & \frac{1}{4} & \cdots & \\[4pt]
\frac{1}{2} & \frac{1}{3} & \frac{1}{4} & \frac{1}{5} & \cdots & \\[4pt]
\frac{1}{3} & \frac{1}{4} & \frac{1}{5} & \frac{1}{6} & \cdots & \\[4pt]
\frac{1}{4} & \frac{1}{5} & \frac{1}{6} & \frac{1}{7} & \cdots &\\
\vdots & \vdots & \vdots & \vdots & \ddots & \\
\end{bmatrix},$$
as an operator on $\ell^2$, involves a similar analysis as with the Toeplitz matrix $T_{\cos \theta}$ from the previous section. But here, the spectral representation theorem of Hilbert is replaced by one of Rosenblum \cite{MR99599}. We outline this analysis here.

The Laguerre polynomials $\{L_{n}(x): n \geq 0\}$ form an orthonormal basis for $L^2((0, \infty), e^{-x} dx)$. A simple integral substitution shows that the map  $(Q f)(x) = e^{-x/2} f(x)$ is unitary from $L^2((0, \infty), e^{-x} dx)$ onto $L^2((0, \infty), dx)$. Thus $$\{Q L_n = e^{-x/2} L_{n}(x): n \geq 0\}$$ is an orthonormal basis for $L^2((0, \infty), dx)$. 

Lebedev \cite{MR0032831, MR0029444} proved that if 
$$K_{\nu}(z) = \int_{0}^{\infty} e^{-z \cosh(t)} \cosh(\nu t)\,dt,$$
the modified Bessel function of the third kind, then 
the operator 
$$(U f)(\tau) = \int_{0}^{\infty} \frac{\sqrt{2 \tau \sinh(\pi \tau)}}{\pi \sqrt{x}} K_{i \tau}(\frac{x}{2}) f(x) dx$$
is a unitary operator from $L^2((0, \infty), dx)$ to itself. Thus 
$$\{w_{n}(x) = U Q L_n: n \geq 0\}$$ 
is an orthonormal basis for $L^2((0, \infty), dx)$. Rosenblum \cite{MR99599} proves that if 
\begin{equation}\label{7y7YnnjnJ}
h(\tau) = \frac{\pi}{\cosh(\pi \tau)},
\end{equation}
then 
$$\langle M_{h} w_m, w_n\rangle_{L^2((0, \infty), dx)} = \frac{1}{n + m + 1}, \quad m, n \geq 0.$$
This last quantity equals $\langle H \vec{e}_{m}, \vec{e}_{n}\rangle_{\ell^2}$ (the entries of the Hilbert matrix). In summary, the linear  transformation $W: \ell^2 \to L^2((0, \infty), dx)$ defined by 
$$W(\{a_n\}_{n \geq 0}) = \sum_{n = 0}^{\infty} a_{n} w_n$$ 
is unitary with $W H W^{*} = M_{h}$.

As in the previous section, if $g \in L^{\infty}((0, \infty), dx)$ with $g^2 = h$, then $M_{g}$ is a square root of $M_h$ and thus $W^{*} M_g W$ is a square root of $H$. Conversely, if $T \in \B(\ell^2)$ with $T^2 = H$, then $W T W^{*}$ is a square root of $M_h$ and hence, as we have seen several times before, $W T W^{*}$ belongs to the commutant of $M_h$.  Since $h$ is a monotone decreasing function on $(0, \infty)$, $h$ is injective and hence by a well-known fact about multiplication operators, $M_h$ is cyclic. Since the commutant of a cyclic multiplication operator is the set of multiplication operators $M_{g}$ on $L^{2}((0, \infty), dx)$ with $g \in L^{\infty}((0, \infty), dx)$ we see as before that 
$T = W^{*} M_g W$, where $g^2 = h$. We therefore arrive at the following theorem. Below, we regard any $T \in \mathcal{B}(\ell^2)$ as an infinite matrix.

\begin{Theorem}\label{HMiiiiI}
For $T \in \B(\ell^2)$ the following are equivalent. 
\begin{enumerate}
\item[(i)] $T^2 = H$.
\item[(ii)] There is a measurable function $g$ on $(0, \infty)$ with $g^2 = h$, where $h$ is the function from \eqref{7y7YnnjnJ}, such that 
$$T = \Big[ \int_{0}^{\infty} g(x) w_m(x) \overline{w_{n}(x)} dx\Big]_{m, n = 0}^{\infty}.$$
\end{enumerate}
\end{Theorem}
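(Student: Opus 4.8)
The plan is to mirror, almost verbatim, the argument already carried out for the Toeplitz matrix $T_{\cos\theta}$ in the previous section, with the Hilbert–Chebyschev spectral picture replaced by the Lebedev–Rosenblum picture that was set up in the paragraphs immediately preceding the statement. The direction (ii)$\Rightarrow$(i) is the easy half: given a measurable $g$ on $(0,\infty)$ with $g^2 = h$, the operator $M_g$ on $L^2((0,\infty),dx)$ is bounded (since $h$, hence any such $g$, is bounded) and satisfies $M_g^2 = M_{g^2} = M_h$; conjugating by the unitary $W$ gives $(W^* M_g W)^2 = W^* M_h W = H$, and the matrix entries of $W^* M_g W$ with respect to the standard basis $(\vec e_n)$ of $\ell^2$ are exactly $\langle W^* M_g W \vec e_m, \vec e_n\rangle = \langle M_g w_m, w_n\rangle = \int_0^\infty g(x) w_m(x)\overline{w_n(x)}\,dx$.

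For the direction (i)$\Rightarrow$(ii), suppose $T \in \B(\ell^2)$ with $T^2 = H$. Conjugating by $W$, the operator $S := WTW^*$ satisfies $S^2 = WHW^* = M_h$, so by Lemma \ref{98yuijokl}, $S$ lies in the commutant $\{M_h\}'$. The key structural input is that $M_h$ is a cyclic self-adjoint operator: $h(\tau) = \pi/\cosh(\pi\tau)$ is strictly monotone decreasing on $(0,\infty)$, hence injective, so $M_h$ is unitarily equivalent (via the pushforward spectral measure) to multiplication by the identity on an $L^2$ space, which is cyclic; equivalently, one can cite the standard fact that the commutant of multiplication by an injective function on $L^2(\mu)$ consists exactly of the multiplication operators $\{M_g : g \in L^\infty\}$. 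Therefore $S = M_g$ for some $g \in L^\infty((0,\infty),dx)$, and then $M_{g^2} = S^2 = M_h$ forces $g^2 = h$ a.e.\ by the uniqueness of the symbol of a multiplication operator. Undoing the conjugation, $T = W^* M_g W$, whose matrix representation is the one displayed in (ii), exactly as in the computation above.

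The only step that requires a little care — and the one I'd flag as the main (minor) obstacle — is the justification that $\{M_h\}' = \{M_g : g \in L^\infty\}$. This is precisely the analogue of the parenthetical argument given inside the proof of Theorem \ref{Hilbertertertry}, but there the operator was multiplication by $x$ on a finite interval with polynomials dense, whereas here we are on $(0,\infty)$ and $h$ is not the coordinate function. The clean way to handle it is to note that $h$ is a bounded injective Borel function, so the spectral theorem identifies $M_h$ with multiplication by the coordinate on $L^2$ of the image measure; under this identification the commutant is the weak-$*$ closed algebra generated by $M_h$, which is the full multiplication algebra, and these are all of the form $M_g$ with $g = \omega\circ h$ for some bounded Borel $\omega$ — in particular $g \in L^\infty((0,\infty),dx)$. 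Alternatively one can invoke directly the cited fact (as the paragraph before the theorem does) that the commutant of a cyclic multiplication operator is its multiplication algebra. Once this is in hand the rest is bookkeeping: boundedness of $M_g$, the identity $M_g^2 = M_{g^2}$, uniqueness of multiplier symbols, and transporting matrix entries through $W$.
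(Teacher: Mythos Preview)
Your proposal is correct and follows essentially the same route as the paper: conjugate by the unitary $W$ to transfer the problem to $M_h$ on $L^2((0,\infty),dx)$, invoke Lemma \ref{98yuijokl} to land in the commutant of $M_h$, use the injectivity of $h$ (strictly decreasing) to conclude $M_h$ is cyclic so that its commutant is the multiplication algebra, and then read off $g^2 = h$ from uniqueness of the symbol. The paper's argument (given in the paragraph preceding the theorem) is identical in structure, and your flagged justification of the commutant step matches the paper's appeal to the ``well-known fact'' about cyclic multiplication operators.
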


\section{Square roots of the Ces\`{a}ro operator}

The Ces\`{a}ro operator $C: H^2 \to H^2$ defined by 
$$(C f)(z) = \frac{1}{z} \int_{0}^{z} \frac{f(\xi)}{1 - \xi} d\xi, \quad z \in \D,$$ is bounded on $H^2$ and a power series computation shows that if $f(z) = \sum_{j = 0}^{\infty} a_{j} z^j \in H^2$, then 
$$(C f)(z) = \sum_{n = 0}^{\infty} \Big(\frac{1}{n + 1} \sum_{j = 0}^{n} a_j\Big) z^n.$$
Some basic facts about $C$ are found in \cite{MR187085}. With resect to the standard orthonormal basis $(z^n)_{n = 0}^{\infty}$ for $H^2$, the matrix representation of $C$ is 
\begin{equation}\label{matrosidjfdsfCesaroo}
\begin{bmatrix}
1 & \0 & \0 & \0 & \0 &  \cdots\\[3pt]
\frac{1}{2} & \frac{1}{2} & \0 & \0 & \0 & \cdots\\[3pt]
\frac{1}{3} & \frac{1}{3} & \frac{1}{3} & \0 & \0 & \cdots\\[3pt]
\frac{1}{4} & \frac{1}{4} & \frac{1}{4} & \frac{1}{4} & \0 & \cdots\\[3pt]
\frac{1}{5} & \frac{1}{5} & \frac{1}{5} & \frac{1}{5} & \frac{1}{5} & \cdots\\[-3pt]
\vdots & \vdots & \vdots & \vdots & \vdots & \ddots
\end{bmatrix}
\end{equation}
which is known as the {\em Ces\`{a}ro matrix}.
Though not quite obvious, $C$ has a square root and in fact, one can write them all down - since  there are only two of them. This is the topic of this section. It is important to note here that by the Conway-Olin functional calculus for subnormal operators \cite{MR399918}, one can prove that $C$ has at least one square root. The purpose here is to show that $C$ has exactly two square roots and to specifically write them down. 

Our path to identify the square roots of $C$ is through subnormal operators and the work of Kriete and Trutt. Along the way to proving that $C$ is subnormal, a paper of Kriete and Trutt \cite{MR281025} shows that for $w \in \D$, the function 
$$v_{w}(z) = (1 - z)^{w/(1 - w)}$$
belongs to $H^2$ and satisfies 
$(I - C^{*}) v_{w} = w v_{w}.$
The space 
$$\mathcal{H} = \{F(z) = \langle f, v_{\overline{z}}\rangle_{H^2}: f \in H^2\}$$ defines a vector space of analytic functions on $\D$ that becomes a Hilbert space, in fact a reproducing kernel Hilbert space, when endowed with the norm $\|F\|_{\mathcal{H}} = \|f\|_{H^2}$. This makes the operator $(U f)(z) = F(z)$ a unitary operator from $H^2$ to $\mathcal{H}$. Furthermore, 
\begin{align*}
(U (I - C) f)(z) & = 
\langle (I - C) f, \phi_{\overline{z}}\rangle_{H^2}\\ & = \langle f, (I - C^{*}) v_{\overline{z}}\rangle_{H^2} \\
& = \langle f, \overline{z} v_{\overline{z}}\rangle_{H^2}\\
& = z \langle f, v_{\overline{z}}\rangle_{H^2}\\
& = z (U f)(z)
\end{align*}
for all $f \in H^2$.
Thus, $U (I - C) = M_{z} U$ on $\mathcal{H}$. In summary, $C$ is unitarily equivalent to $M_{1 - z}$ on $\mathcal{H}$.

Thus if $A$ is a square root of $C$, then, as we have seen with the other operators covered in this paper, $A \in \{C\}'$ and thus $U A  U^{*} \in \{M_{1 - z}\}' = \{M_z\}'$. So now we need to identify $\{M_z\}'$. This will involve the multiplier algebra of $\mathcal{H}$. 

Another paper of Kriete and Trutt \cite{MR350489} argues that each $\phi \in H^{\infty}$ is a multiplier of $\mathcal{H}$ (i.e., $\phi \mathcal{H} \subset \mathcal{H}$). This is significant since for a general Hilbert space of analytic functions (the Dirichlet space for example), not every $H^{\infty}$ function is a multiplier. Furthermore, a standard fact that the multiplier algebra of any reproducing kernel Hilbert space of analytic functions on $\D$ is contained in $H^{\infty}$, along with the observation above, shows that the multiplier algebra of $\mathcal{H}$ is $H^{\infty}$.

The Hilbert space $\mathcal{H}$ also contains the polynomials as a dense set and a standard argument, along with the discussion in the previous paragraph, shows that $\{M_{z}\}' = \{M_{\phi}: \phi \in H^{\infty}\}$. Putting this all together,  it follows that if $A$ is a square root of $C$, then $U A U^{*} = M_{\phi}$ for some $\phi \in H^{\infty}$. But 
$$M_{\phi^2} = M_{\phi}^{2} = (U A U^{*})^2 = U C U^{*} = M_{1 - z}.$$ and thus $\phi^2 = 1 - z$ on $\D$. But since $\phi$ is analytic on $\D$, it must be the case that $\phi(z) = \pm \sqrt{1 - z}$. 
Thus, the Ces\`{a}ro operator has 
$$U^{*} M_{\sqrt{1 - z}} U \quad \mbox{and} \quad U^{*} M_{-\sqrt{1 - z}} U$$
as its only square roots. 

The above formulas for the square roots of $C$ are a bit unsatisfying since they are hidden behind a unitary operator. Our goal in the next two results is to produce a more tangible description of the two square roots of $C$. Note that 
$$\sqrt{1 - z} = 1 - \tfrac{1}{2} z - \tfrac{1}{8} z^2  - \tfrac{1}{16} z^3 - \tfrac{5}{128} z^4 - \cdots  = 1 - \sum_{k = 1}^{\infty} \Big|{\frac{1}{2}  \choose k}\Big| z^k,$$ where the branch of the square root  is taken so that  $\sqrt 1 = 1$. It  is well-known that 
$$\sum_{k = 0}^{\infty} \Big|{\frac{1}{2}  \choose k}\Big|  < \infty.$$

\begin{Theorem}\label{cesarooooo}
The following are equivalent for $A \in \B(H^2)$.
\begin{enumerate}
\item[(i)] $A^2 = C$.
\item[(ii)]
$$A = \pm \Big(I - \tfrac{1}{2} (I - C) -  \tfrac{1}{8} (I - C)^2 - \tfrac{1}{16}  (I - C)^3 + \cdots\Big),$$
where the series above converges in operator norm.
\end{enumerate}
\end{Theorem}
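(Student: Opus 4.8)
The plan is to combine the analysis already completed in this section with one quantitative input: a uniform bound on the powers of $I-C$. Recall that the preceding discussion has shown that $A^2=C$ holds if and only if $A = U^{*}M_{\sqrt{1-z}}U$ or $A = -U^{*}M_{\sqrt{1-z}}U$, where $U$ is the Kriete--Trutt unitary carrying $C$ to $M_{1-z}$ on $\mathcal{H}$, and that $U(I-C)U^{*}=M_z$. Writing $\sqrt{1-z}=\sum_{k=0}^{\infty}a_k z^k$ with $a_0=1$ and $a_k = -|\binom{1/2}{k}|$ for $k\geq 1$, so that $\sum_k|a_k|<\infty$, the displayed series in (ii) is exactly $\pm\sum_k a_k(I-C)^k$. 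Hence it suffices to prove the single operator identity
\[
U^{*}M_{\sqrt{1-z}}U = \sum_{k=0}^{\infty} a_k (I-C)^k,
\]
with the series converging in operator norm; the theorem then follows in both directions.

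The crucial estimate is $\|(I-C)^n\|\leq 1$ for every $n\geq 0$. I would deduce this from subnormality: $C$ is subnormal by Kriete--Trutt, hence so is the affine image $I-C$, and $\sigma(I-C)=\overline{\D}$ since $\sigma(C)$ is the closed disk $\{w:|w-1|\leq 1\}$ \cite{MR187085}. If $N$ on a space containing $\mathcal{H}$ is the minimal normal extension of $I-C$ (identified with $M_z$ on $\mathcal{H}$), then $\sigma(N)\subseteq\sigma(I-C)$, so $r(N)\leq 1$, and therefore $\|(I-C)^n\| = \|N^n|_{\mathcal{H}}\|\leq\|N^n\|=\|N\|^n=r(N)^n\leq 1$. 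Concretely this just reflects that the Kriete--Trutt space $\mathcal{H}$ sits inside an $L^2(\mu)$ with $\mu$ carried by $\overline{\D}$, on which $M_z$ is the ambient normal multiplication operator, so $\|(I-C)^n\|\leq\|z^n\|_{L^\infty(\mu)}\leq 1$.

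Granting this bound, the rest is routine. Since $\sum_k|a_k|<\infty$ and $\|(I-C)^k\|\leq 1$, the series $\sum_k a_k(I-C)^k$ converges absolutely in operator norm; call its sum $A_0$. Conjugating the partial sums by $U$ gives $U\big(\sum_{k=0}^{N}a_k(I-C)^k\big)U^{*}=\sum_{k=0}^{N}a_kM_{z^k}=M_{p_N}$, where $p_N(z)=\sum_{k\leq N}a_k z^k$. Because $\|p_N-\sqrt{1-z}\|_{H^{\infty}(\D)}\leq\sum_{k>N}|a_k|\to 0$ and $\sqrt{1-z}\in H^\infty$ is a multiplier of $\mathcal{H}$, a routine density argument (polynomials are dense in $\mathcal{H}$, and $\mathcal{H}$-norm convergence forces pointwise convergence on $\D$) identifies the norm limit $UA_0U^{*}$ of $M_{p_N}$ as $M_{\sqrt{1-z}}$. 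Thus $A_0=U^{*}M_{\sqrt{1-z}}U$, which is one of the two square roots of $C$, and $-A_0$ is the other; this establishes both implications.

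The step I expect to be the genuine obstacle is the uniform bound $\|(I-C)^n\|\leq 1$. Since the Taylor coefficients of $\sqrt{1-z}$ decay only like $k^{-3/2}$, absolute summability of $(a_k)$ by itself does not force $\sum_k a_k(I-C)^k$ to converge in norm; one really needs the powers of $I-C$ to stay bounded, and this is precisely where the subnormality of $C$ (equivalently, the concrete Kriete--Trutt model of $\mathcal{H}$ inside an $L^2$ space over the disk) does the work.
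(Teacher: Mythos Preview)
Your argument is correct and follows essentially the same route as the paper: reduce to showing that $\sum_k a_k (I-C)^k$ converges in norm to $U^{*}M_{\sqrt{1-z}}U$, use a uniform bound on $\|(I-C)^k\|$ together with $\sum_k|a_k|<\infty$ for norm convergence, and then identify the limit via the Kriete--Trutt unitary. The one difference is in how the key bound is obtained: the paper simply quotes $\|I-C\|=\|M_z\|=1$ from \cite{MR187085}, which immediately gives $\|(I-C)^k\|\leq 1$, whereas you extract the same bound from subnormality and spectral inclusion for the minimal normal extension. Your argument is valid but can be shortened: subnormal operators are hyponormal and hence normaloid, so $\|I-C\|=r(I-C)=1$ directly, without passing through the minimal normal extension.
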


\begin{proof}

From the above discussion, 
$$U^{*} M_{\sqrt{1 - z}} U \quad \mbox{and} \quad U^{*} M_{-\sqrt{1 - z}} U$$ are the only two square roots of $C$. 
 Since $\|M_{z}\| = \|I - C\| = 1$ \cite{MR187085}, the series 
$$I - \tfrac{1}{2} M_z -  \tfrac{1}{8} M_{z}^2 - \tfrac{1}{16}  M_{z}^{3} - \cdots$$ converges in operator norm to 
$M_{\sqrt{1 - z}}$. But since $M_{z}^{k}$ is unitarily equivalent to $(I - C)^{k}$, we get 
\begin{align*}
U^{*} M_{\sqrt{1 - z}} U & = U^{*} \Big(I - \tfrac{1}{2} M_z -  \tfrac{1}{8} M_{z}^2 - \tfrac{1}{16}  M_{z}^{3} - \cdots\Big) U\\
& = I - \tfrac{1}{2} U^{*} M_z U - \tfrac{1}{8} (U^{*} M_z U)^2 - \tfrac{1}{16} (U^{*} M_z U)^3 + \cdots\\
& = I - \tfrac{1}{2} (I - C) -  \tfrac{1}{8} (I - C)^2 - \tfrac{1}{16}  (I - C)^3 + \cdots.
\end{align*}
The other square root of $C$ is computed in a similar way.
\end{proof}

 Using an idea of Hausdorff \cite{MR1544453}, the paper \cite{MR979593} produces all of the lower triangular square roots of the Ces\`{a}ro matrix from \eqref{matrosidjfdsfCesaroo}. That paper considers the Ces\`{a}ro matrix and its resulting square roots as linear transformations on all one-sided sequences (not necessarily $\ell^2$ sequences nor any assumption on the linear transformation being bounded). They show that all of the lower triangular square roots of the Ces\`{a}ro matrix are the matrices $A^{\sigma} = [A^{\sigma}_{ij}]_{i, j = 0}^{\infty}$, where 
\begin{equation}\label{09876tyuhjgyuYUHBGYUIJ}
A^{\sigma}_{ij} =
\begin{cases}
{\displaystyle  {i \choose j} \sum_{\ell = 0}^{i - j} (-1)^{\ell} \sigma(\ell + j + 1) \frac{1}{\sqrt{\ell + j  + 1}} {i - j \choose  \ell}} & i \geq j,\\
 0 & i < j,
 \end{cases}
 \end{equation}
 and $\sigma: \N \to \{-1, 1\}$. One can work out that $A^{\sigma}$ equals
 $$ {\tiny  \begin{bmatrix}
1 & 0 & 0 & 0 & 0 &  \cdots\\[3pt]
1 & -1 & 0 & 0 & 0 & \cdots\\[3pt]
1 & -2 & 1 & 0 & 0 & \cdots\\[3pt]
1 & -3 & 3 & -1 & 0 & \cdots\\[3pt]
1 &-4 & 6 & -4 & 1 & \cdots\\[-3pt]
\vdots & \vdots & \vdots & \vdots & \vdots & \ddots
\end{bmatrix}  \begin{bmatrix}
\pm 1 & 0 &0 & 0 & \cdots\\
0 & \pm\sqrt{\frac{1}{2}}& 0 & 0 & \cdots\\
0 & 0 & \pm \sqrt{\frac{1}{3}} & 0 & \cdots\\
0 & 0 & 0 & \pm \sqrt{\frac{1}{4}} & \cdots\\
\vdots & \vdots & \vdots & \vdots & \ddots
\end{bmatrix}\begin{bmatrix}
1 & 0 & 0 & 0 & 0 &  \cdots\\[3pt]
1 & -1 & 0 & 0 & 0 & \cdots\\[3pt]
1 & -2 & 1 & 0 & 0 & \cdots\\[3pt]
1 & -3 & 3 & -1 & 0 & \cdots\\[3pt]
1 &-4 & 6 & -4 & 1 & \cdots\\[-3pt]
\vdots & \vdots & \vdots & \vdots & \vdots & \ddots
\end{bmatrix}},$$
 where the sign along the diagonal is determined by the function $\sigma$.

  They also conjecture that the choices of $A^{\sigma}$, where $\sigma \equiv 1$ or $\sigma  \equiv  -1$, are the two bounded square roots of the Cesaro matrix (viewed as an operator on $\ell^2$). This next theorem verifies this conjecture (thus answering a question posted by Halmos) and also gives an exact description of the square roots from Theorem \ref{cesarooooo}.
 
 \begin{Theorem}\label{9099887YY66T+}
 The following are equivalent for $A \in \B(H^2)$. 
 \begin{enumerate}
 \item[(i)] $A^2 = C$. 
 \item[(ii)] With respect to the orthonormal  basis $(z^n)_{n = 0}^{\infty}$ for $H^2$, the matrix representation of $A$ is either $[A_{ij}]_{i, j = 0}^{\infty}$ or $-[A_{ij}]_{i, j = 0}^{\infty}$, where
 $$A_{ij} = \begin{cases}
{\displaystyle  {i \choose j} \sum_{\ell = 0}^{i - j} (-1)^{\ell}  \frac{1}{\sqrt{\ell + j  + 1}} {i - j \choose  \ell}} & i \geq j\\
 0 & i < j.
 \end{cases}$$
 \end{enumerate}
 \end{Theorem}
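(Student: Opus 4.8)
The plan is to leverage Theorem~\ref{cesarooooo}, which already tells us that the two square roots of $C$ are $\pm U^{*}M_{\sqrt{1-z}}U$, and to compute the matrix entries of $U^{*}M_{\sqrt{1-z}}U$ with respect to the basis $(z^n)_{n=0}^{\infty}$ of $H^2$. By the unitary equivalence $U(I-C)=M_zU$, we have $U^{*}M_{z^k}U=(I-C)^k$ for every $k\geq 0$, and since $\sqrt{1-z}=\sum_{k\geq 0}c_k z^k$ with $c_k=-|\binom{1/2}{k}|$ for $k\geq 1$ and $c_0=1$, we get $U^{*}M_{\sqrt{1-z}}U=\sum_{k\geq 0}c_k(I-C)^k$ in operator norm. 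So everything reduces to identifying the $(i,j)$ entry of $\sum_k c_k(I-C)^k$ and checking it equals the claimed $A_{ij}$.

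The key computational step is to find the matrix of $I-C$ (equivalently, of $M_z$ acting on $\mathcal{H}$ transported back to $H^2$). The cleanest route is via the factorization already displayed in the excerpt: the lower-triangular Pascal-type matrix $P=[\,(-1)^j\binom{i}{j}\,]_{i,j}$ (the matrix of coefficients $(1-w)^i$ expanded in powers of $w$) conjugates the Ces\`aro matrix to the diagonal matrix $D=\operatorname{diag}(\tfrac{1}{n+1})$; concretely $C=P^{-1}DP$, hence $I-C=P^{-1}(I-D)P$ and $f(I-C)=P^{-1}f(I-D)P$ for any function $f$ holomorphic on a neighborhood of the spectrum $\{1-\tfrac1{n+1}:n\geq 0\}\cup\{1\}\subset[0,1]$. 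Applying this with $f(t)=\sqrt{1-t}$ gives $U^{*}M_{\sqrt{1-z}}U=P^{-1}\operatorname{diag}\!\big(\tfrac{1}{\sqrt{n+1}}\big)P$, and since $P$ is an involution up to signs (in fact $P^{-1}=P$ here because $P^2=I$ for the signed Pascal matrix), one reads off
$$A_{ij}=\sum_{\ell}P_{i\ell}\,\frac{1}{\sqrt{\ell+1}}\,P_{\ell j}=\sum_{\ell\geq j}(-1)^{\ell}\binom{i}{\ell}\frac{1}{\sqrt{\ell+1}}(-1)^{j}\binom{\ell}{j},$$
and a reindexing $\ell\mapsto \ell+j$ together with the identity $\binom{i}{\ell+j}\binom{\ell+j}{j}=\binom{i}{j}\binom{i-j}{\ell}$ converts this into exactly the stated formula
$A_{ij}=\binom{i}{j}\sum_{\ell=0}^{i-j}(-1)^{\ell}\frac{1}{\sqrt{\ell+j+1}}\binom{i-j}{\ell}$ for $i\geq j$, and $0$ otherwise. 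The other square root is the negative of this, matching the $\pm$ in Theorem~\ref{cesarooooo}.

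I also need to justify using the holomorphic functional calculus with $\sqrt{1-t}$: the spectrum of $I-C$ lies in $[0,1]$ and the branch of $\sqrt{1-t}$ used is holomorphic on a neighborhood of $[0,1)$ and continuous up to $1$, which is enough since $I-C$ is a norm-limit argument already handled in Theorem~\ref{cesarooooo}; alternatively, one simply uses the norm-convergent power series $\sum_k c_k(I-C)^k$ directly, term by term, which is legitimate because $\|I-C\|=1$ and $\sum_k|c_k|<\infty$, and then computes the matrix entry of each $(I-C)^k$ and sums. For the comparison with \eqref{09876tyuhjgyuYUHBGYUIJ}: the displayed triple-matrix factorization in the excerpt is $A^{\sigma}=P\,\operatorname{diag}(\pm\tfrac{1}{\sqrt{n+1}})\,P$, so taking $\sigma\equiv 1$ recovers precisely the $A$ above, confirming the Hausdorff--type computation of \cite{MR979593} and verifying their conjecture that among all the lower-triangular square roots $A^{\sigma}$, only $\sigma\equiv 1$ and $\sigma\equiv -1$ yield bounded operators on $\ell^2$ (the boundedness being automatic here since these are $U^{*}M_{\pm\sqrt{1-z}}U$).

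The main obstacle I expect is purely bookkeeping: verifying $P^2=I$ for the signed Pascal matrix and carefully pushing the two binomial sums through the reindexing to land on the exact closed form, while being careful that the matrix product $\sum_{\ell}P_{i\ell}d_\ell P_{\ell j}$ converges entrywise (each row of $P$ is finitely supported, so this is a finite sum and there is no analytic subtlety at the level of individual entries). A secondary point worth a sentence is confirming that the diagonalization $C=P^{-1}DP$ is valid as an identity of matrices/operators and not merely formally — this follows because $P$ and $P^{-1}$ are bounded on $H^2$? Actually $P$ is \emph{not} bounded on $\ell^2$, so the safe formulation is to do the entrywise computation with the norm-convergent series $\sum_k c_k(I-C)^k$ (whose entries are genuine convergent scalar series by $\sum|c_k|<\infty$ and $\|I-C\|\leq 1$), and use $C=P^{-1}DP$ only as a formal/entrywise identity to evaluate the entries of $(I-C)^k$; this avoids any unbounded-operator pitfalls while still delivering the formula.
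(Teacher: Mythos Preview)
Your proposal is correct but follows a different route from the paper's. The paper's argument is shorter and more indirect: it cites the classification from \cite{MR979593} that \emph{every} lower-triangular square root of the Ces\`{a}ro matrix is some $A^{\sigma}$, observes from Theorem~\ref{cesarooooo} that the bounded square root $\sum_k c_k(I-C)^k$ is lower triangular, and then identifies which $\sigma$ occurs by computing \emph{only the diagonal entries} (the $(n,n)$ entry of $(I-C)^k$ is $(1-\tfrac{1}{n+1})^k$, so summing gives $\tfrac{1}{\sqrt{n+1}}$, forcing $\sigma\equiv 1$). Your approach instead computes \emph{all} matrix entries directly: you use the entrywise diagonalization $I-C=P(I-D)P$ with the signed Pascal matrix $P$ (where $P^2=I$), note that every row of $P$ is finitely supported so that $(I-C)^k=P(I-D)^kP$ holds entrywise via finite sums, interchange the finite $\ell$-sum with the absolutely convergent $k$-series to obtain $A_{ij}=\sum_{\ell}P_{i\ell}\tfrac{1}{\sqrt{\ell+1}}P_{\ell j}$, and then reindex with the identity $\binom{i}{\ell+j}\binom{\ell+j}{j}=\binom{i}{j}\binom{i-j}{\ell}$ to land on the stated formula. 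Your method is more self-contained (it does not need to invoke the full classification of lower-triangular square roots from \cite{MR979593}) and gives a direct derivation of every entry, at the cost of a longer binomial computation; the paper's method is slicker but leans on the external reference for the formula itself. Your care about $P$ being unbounded on $\ell^2$ and the decision to work entrywise with finite sums is exactly the right way to avoid the pitfall you flagged.
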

 
 \begin{proof}
 By the above discussion of the results from \cite{MR979593}, all of the lower triangular square roots of the Ces\`{a}ro matrix (as viewed as an operator on the space of all sequences) are of the form $A^{\sigma}$ for some $\sigma: \N \to \{-1, 1\}$. From \eqref{09876tyuhjgyuYUHBGYUIJ}, notice that 
 \begin{equation}\label{okjnbiujhgbvuyhgfvytgf}
 A^{\sigma}_{ii} = \sigma(i + 1) \frac{1}{\sqrt{i + 1}}
 \end{equation}
  and so the choice of $\sigma$ is determined by the entries of $A^{\sigma}$ on its diagonal. If 
 $$A =  \Big(I - \tfrac{1}{2} (I - C) -  \tfrac{1}{8} (I - C)^2 - \tfrac{1}{16}  (I - C)^3 + \cdots\Big),$$
 one of the bounded square roots of the Ces\`{a}ro matrix from Theorem \ref{cesarooooo}, notice that $(I - C)^{k}$ is lower triangular for all $k \geq 0$ and thus so is $A$. We just need to determine which choice of $\sigma$ yields $A^{\sigma} = A$.
 
 The $(n, n)$ entry of $I - C$ is $(1 - \frac{1}{n + 1})$ for $n \geq 0$ and since $I - C$ is lower triangular, it follows that the $(n, n)$ entry of $(I - C)^{k}$ is $(1 - \frac{1}{n + 1})^{k}$. Thus, the $(n, n)$ entry of $A$ is 
 $$1 - \tfrac{1}{2} (1 - \tfrac{1}{n + 1}) - \tfrac{1}{8} (1 - \tfrac{1}{n + 1})^2 - \tfrac{1}{16} (1 - \tfrac{1}{n + 1})^3 - \cdots.$$
 But the above is just the Taylor series of $\sqrt{1 - z}$ evaluated at $z =1 -  \frac{1}{n + 1}$ and this turns out to be $\sqrt{\frac{1}{n +1}}$.  By \eqref{okjnbiujhgbvuyhgfvytgf}, this  corresponds to $A^{\sigma}$ with $\sigma \equiv 1$. 
 
 When 
 $$A = - \Big(I - \tfrac{1}{2} (I - C) -  \tfrac{1}{8} (I - C)^2 - \tfrac{1}{16}  (I - C)^3 + \cdots\Big),$$
 a similar analysis shows that corresponds to $A^{\sigma}$ with $\sigma \equiv -1$. 
   \end{proof}
   
 Thus the only two bounded square roots of the Ces\`{a}ro (matrix) operator are 
 $${\tiny  \begin{bmatrix}
1 & 0 & 0 & 0 & 0 &  \cdots\\[3pt]
1 & -1 & 0 & 0 & 0 & \cdots\\[3pt]
1 & -2 & 1 & 0 & 0 & \cdots\\[3pt]
1 & -3 & 3 & -1 & 0 & \cdots\\[3pt]
1 &-4 & 6 & -4 & 1 & \cdots\\[-3pt]
\vdots & \vdots & \vdots & \vdots & \vdots & \ddots
\end{bmatrix}  \begin{bmatrix}
1 & 0 &0 & 0 & \cdots\\
0 & \sqrt{\frac{1}{2}}& 0 & 0 & \cdots\\
0 & 0 & \sqrt{\frac{1}{3}} & 0 & \cdots\\
0 & 0 & 0 & \sqrt{\frac{1}{4}} & \cdots\\
\vdots & \vdots & \vdots & \vdots & \ddots
\end{bmatrix}\begin{bmatrix}
1 & 0 & 0 & 0 & 0 &  \cdots\\[3pt]
1 & -1 & 0 & 0 & 0 & \cdots\\[3pt]
1 & -2 & 1 & 0 & 0 & \cdots\\[3pt]
1 & -3 & 3 & -1 & 0 & \cdots\\[3pt]
1 &-4 & 6 & -4 & 1 & \cdots\\[-3pt]
\vdots & \vdots & \vdots & \vdots & \vdots & \ddots
\end{bmatrix}} $$ and 
$${\tiny  \begin{bmatrix}
1 & 0 & 0 & 0 & 0 &  \cdots\\[3pt]
1 & -1 & 0 & 0 & 0 & \cdots\\[3pt]
1 & -2 & 1 & 0 & 0 & \cdots\\[3pt]
1 & -3 & 3 & -1 & 0 & \cdots\\[3pt]
1 &-4 & 6 & -4 & 1 & \cdots\\[-3pt]
\vdots & \vdots & \vdots & \vdots & \vdots & \ddots
\end{bmatrix}  \begin{bmatrix}
-1 & 0 &0 & 0 & \cdots\\
0 & -\sqrt{\frac{1}{2}}& 0 & 0 & \cdots\\
0 & 0 & -\sqrt{\frac{1}{3}} & 0 & \cdots\\
0 & 0 & 0 & -\sqrt{\frac{1}{4}} & \cdots\\
\vdots & \vdots & \vdots & \vdots & \ddots
\end{bmatrix}\begin{bmatrix}
1 & 0 & 0 & 0 & 0 &  \cdots\\[3pt]
1 & -1 & 0 & 0 & 0 & \cdots\\[3pt]
1 & -2 & 1 & 0 & 0 & \cdots\\[3pt]
1 & -3 & 3 & -1 & 0 & \cdots\\[3pt]
1 &-4 & 6 & -4 & 1 & \cdots\\[-3pt]
\vdots & \vdots & \vdots & \vdots & \vdots & \ddots
\end{bmatrix}}.$$

\begin{Remark}

It is important to now that any other option of sign along the main diagonal of the middle matrix will yield an unbounded operator on $\ell^2$. For example, 
 $${\tiny  \begin{bmatrix}
1 & 0 & 0 & 0 & 0 &  \cdots\\[3pt]
1 & -1 & 0 & 0 & 0 & \cdots\\[3pt]
1 & -2 & 1 & 0 & 0 & \cdots\\[3pt]
1 & -3 & 3 & -1 & 0 & \cdots\\[3pt]
1 &-4 & 6 & -4 & 1 & \cdots\\[-3pt]
\vdots & \vdots & \vdots & \vdots & \vdots & \ddots
\end{bmatrix}  \begin{bmatrix}
-1 & 0 &0 & 0 & \cdots\\
0 & \sqrt{\frac{1}{2}}& 0 & 0 & \cdots\\
0 & 0 & \sqrt{\frac{1}{3}} & 0 & \cdots\\
0 & 0 & 0 & \sqrt{\frac{1}{4}} & \cdots\\
\vdots & \vdots & \vdots & \vdots & \ddots
\end{bmatrix}\begin{bmatrix}
-1 & 0 & 0 & 0 & 0 &  \cdots\\[3pt]
1 & -1 & 0 & 0 & 0 & \cdots\\[3pt]
1 & -2 & 1 & 0 & 0 & \cdots\\[3pt]
1 & -3 & 3 & -1 & 0 & \cdots\\[3pt]
1 &-4 & 6 & -4 & 1 & \cdots\\[-3pt]
\vdots & \vdots & \vdots & \vdots & \vdots & \ddots
\end{bmatrix}} $$
(notice the minus sign in the first entry of the diagonal matrix and all the other entries are positive) can be written as 
 $${\tiny  \begin{bmatrix}
1 & 0 & 0 & 0 & 0 &  \cdots\\[3pt]
1 & -1 & 0 & 0 & 0 & \cdots\\[3pt]
1 & -2 & 1 & 0 & 0 & \cdots\\[3pt]
1 & -3 & 3 & -1 & 0 & \cdots\\[3pt]
1 &-4 & 6 & -4 & 1 & \cdots\\[-3pt]
\vdots & \vdots & \vdots & \vdots & \vdots & \ddots
\end{bmatrix}  \begin{bmatrix}
1 - 2 & 0 &0 & 0 & \cdots\\
0 & \sqrt{\frac{1}{2}}& 0 & 0 & \cdots\\
0 & 0 & \sqrt{\frac{1}{3}} & 0 & \cdots\\
0 & 0 & 0 & \sqrt{\frac{1}{4}} & \cdots\\
\vdots & \vdots & \vdots & \vdots & \ddots
\end{bmatrix}\begin{bmatrix}
1 & 0 & 0 & 0 & 0 &  \cdots\\[3pt]
1 & -1 & 0 & 0 & 0 & \cdots\\[3pt]
1 & -2 & 1 & 0 & 0 & \cdots\\[3pt]
1 & -3 & 3 & -1 & 0 & \cdots\\[3pt]
1 &-4 & 6 & -4 & 1 & \cdots\\[-3pt]
\vdots & \vdots & \vdots & \vdots & \vdots & \ddots
\end{bmatrix}} $$
which is equal to 
 $${\tiny  \begin{bmatrix}
1 & 0 & 0 & 0 & 0 &  \cdots\\[3pt]
1 & -1 & 0 & 0 & 0 & \cdots\\[3pt]
1 & -2 & 1 & 0 & 0 & \cdots\\[3pt]
1 & -3 & 3 & -1 & 0 & \cdots\\[3pt]
1 &-4 & 6 & -4 & 1 & \cdots\\[-3pt]
\vdots & \vdots & \vdots & \vdots & \vdots & \ddots
\end{bmatrix}  \begin{bmatrix}
1 & 0 &0 & 0 & \cdots\\
0 & \sqrt{\frac{1}{2}}& 0 & 0 & \cdots\\
0 & 0 & \sqrt{\frac{1}{3}} & 0 & \cdots\\
0 & 0 & 0 & \sqrt{\frac{1}{4}} & \cdots\\
\vdots & \vdots & \vdots & \vdots & \ddots
\end{bmatrix}\begin{bmatrix}
1 & 0 & 0 & 0 & 0 &  \cdots\\[3pt]
1 & -1 & 0 & 0 & 0 & \cdots\\[3pt]
1 & -2 & 1 & 0 & 0 & \cdots\\[3pt]
1 & -3 & 3 & -1 & 0 & \cdots\\[3pt]
1 &-4 & 6 & -4 & 1 & \cdots\\[-3pt]
\vdots & \vdots & \vdots & \vdots & \vdots & \ddots
\end{bmatrix}}$$
 $$- 2{\tiny  \begin{bmatrix}
1 & 0 & 0 & 0 & 0 &  \cdots\\[3pt]
1 & -1 & 0 & 0 & 0 & \cdots\\[3pt]
1 & -2 & 1 & 0 & 0 & \cdots\\[3pt]
1 & -3 & 3 & -1 & 0 & \cdots\\[3pt]
1 &-4 & 6 & -4 & 1 & \cdots\\[-3pt]
\vdots & \vdots & \vdots & \vdots & \vdots & \ddots
\end{bmatrix}  \begin{bmatrix}
1 & 0 &0 & 0 & \cdots\\
0 & 0 & 0 & 0 & \cdots\\
0 & 0 & 0 & 0 & \cdots\\
0 & 0 & 0 & 0 & \cdots\\
\vdots & \vdots & \vdots & \vdots & \ddots
\end{bmatrix}\begin{bmatrix}
1 & 0 & 0 & 0 & 0 &  \cdots\\[3pt]
1 & -1 & 0 & 0 & 0 & \cdots\\[3pt]
1 & -2 & 1 & 0 & 0 & \cdots\\[3pt]
1 & -3 & 3 & -1 & 0 & \cdots\\[3pt]
1 &-4 & 6 & -4 & 1 & \cdots\\[-3pt]
\vdots & \vdots & \vdots & \vdots & \vdots & \ddots
\end{bmatrix}}.$$
The first matrix in the sum above is one of the bounded square roots of the Ces\`{a}ro operator while the second matrix in the sum turns out to be 
 $${\tiny  \begin{bmatrix}
1 & 0 & 0 & 0 & 0 &  \cdots\\[3pt]
1 & 0 & 0 & 0 & 0 & \cdots\\[3pt]
1 & 0 & 0 & 0 & 0 & \cdots\\[3pt]
1 & 0 & 0 & 0 & 0 & \cdots\\[3pt]
1 & 0 & 0 & 0 & 0 & \cdots\\[-3pt]
\vdots & \vdots & \vdots & \vdots & \vdots & \ddots
\end{bmatrix}}$$
which is clearly an unbounded operator on $\ell^2$. 
\end{Remark}
\bibliographystyle{plain}

\bibliography{references}

\end{document}